\newtheorem{theorem}{Theorem}[section]
\newtheorem{proposition}[theorem]{Proposition}
\newtheorem{lemma}[theorem]{Lemma}
\theoremstyle{definition}
\newtheorem{remark}[theorem]{Remark}
\def\K{\mathfrak{K}}
\def\T{\mathcal{P}}
\def\P{{\rm Prob}}
\def\R{\mathbb{R}}
\def\C{{\mathbb C}}
\def\X{\mathcal X}
\def\bb1{{\rm{1}\hspace{-3pt}\mathbf{l}}}
\def\Ie0{[-\epsilon_0,\epsilon_0]}
\def\BC2{\mathbb{B}\big(\mathbb{C}^2\big)}
\def\beq{\begin{equation}}
\def\eeq{\end{equation}}
\newcommand{\virg}[1]{``#1''}
\numberwithin{equation}{section}
\begin{document}

%------
% Insert the title of your paper and (if necessary)
% a short title for the running head.
%------
\title{Spectral and dynamical results related to certain non-integer base expansions on the unit interval}
\titlemark{Non-integer base expansions on the unit interval}

%------

%%%% Pls fill in all fields for each author
%%%% Label the authors by their position in the authors' list using {}
%%%% If you published any math paper ever, you have an MR Author ID.
%  Please look it up in three easy (and free) steps:
% 1. copy the bibliographic data of any published paper (co-)authored by you in the search field at https://mathscinet.ams.org/mathscinet/freetools/mref
% 2. Hit your name in the search result
% 3. Find your MR Author ID in the first row, copy it in the \mrid{} field
%%%% If you have not created your ORCID yet, you may like to do it now, pls copy it in the field \orcid{}
%%%% Abbreviate first names for the running head

%Horia D. Cornean, Ira W. Herbst, Giovanna Marcelli

\emsauthor{1}{
	\givenname{Horia D.}
	\surname{Cornean}
	\mrid{633226}
	\orcid{0000-0003-2700-8785}}{H.~D.~Cornean}
%%%% Repeat the same fields for each numbered author
\emsauthor{2}{
	\givenname{Ira W.}
	\surname{Herbst}
	\mrid{84635}
	\orcid{}}{I.~W.~Herbst}
\emsauthor{3}{
	\givenname{Giovanna}
	\surname{Marcelli}
	\mrid{1304172}
	\orcid{0000-0002-5684-5016}}{G.~Marcelli}

%%%% Please provide detailed address info for each author
%%%% Use the same numbering as for \emsauthor above
%%%% Please look up the ROR ID of your institute here: https://ror.org
\Emsaffil{1}{
	\department{Department of Mathematical Sciences}
	\organisation{Aalborg University}
	\rorid{04m5j1k67}
	\address{Thomas Manns Vej 23}
	\zip{9220}
	\city{Aalborg}
	\country{Denmark}
	\affemail{cornean@math.aau.dk}}
%%%% Repeat the same fields for each numbered author
%%%% If some author has multiple affiliations, repeat the fields for each affiliation
%%%% Number the affiliations using {}
\Emsaffil{2}{
	\department{Department of Mathematics}
	\organisation{University of Virginia}
	\rorid{0153tk833}
	\address{141 Cabell Drive, Kerchof Hall}
	\zip{VA 22904}
	\city{Charlottesville}
	\country{USA}
	\affemail{iwh@virginia.edu}}
\Emsaffil{3}{
	\department{Dipartimento di Matematica e Fisica}
	\organisation{Università di Roma Tre}
	\rorid{05vf0dg29}
	\address{L.go S. L. Murialdo 1}
	\zip{00146}
	\city{Roma}
	\country{Italy}
	\affemail{giovanna.marcelli@uniroma3.it}}

%------
% Add MSC 2020 codes according to https://zbmath.org/classification/.
% A unique primary MSC code (in curly brackets) is mandatory,
% while secondary MSC codes (in square brackets) are optional.
%------
\classification[37A50]{37A30}

%------
% Add a list of keywords.
%------
\keywords{Perron-Frobenius operator, Koopman operator, non-integer $\beta$-expansions, spectral theory, dynamical systems}

%------
% Insert your abstract.
%------
\begin{abstract}
We consider certain non-integer base $\beta$-expansions of Parry's type and
we study various properties of the transfer (Perron-Frobenius) operator $\T\colon L^p([0,1])\to L^p([0,1])$ with $p\geq 1$  and  its associated composition  (Koopman) operator, which are induced by a discrete dynamical system on the unit interval related to these $\beta$-expansions.
 
 We show that if $f$ is Lipschitz, then the iterated sequence $\{\T^N f\}_{N\geq 1}$ converges exponentially fast (in the $L^1$ norm) to an invariant state corresponding to the eigenvalue $1$ of $\T$. This \virg{attracting} eigenvalue is not isolated: for  $1\leq p\leq 2$ we show that the point spectrum of $\T$ also contains the whole open complex unit disk and we explicitly construct an eigenfunction for every $z$ with $|z|<1$.
\end{abstract}

\maketitle

%------
% INSERT THE BODY OF THE PAPER HERE (except
% acknowledgments, funding info and bibliography)
%------

\section{Introduction and main results}
Let us fix two integers $n\geq 2$ and $q\geq 1$.  There exists a unique positive number (see Lemma \ref{lemmagh2}) 
\begin{equation}
\label{eqn:beta}   
\beta_{n,q}\equiv \beta\in (q,q+1)
\end{equation}
which obeys the following equation: 
\begin{equation}\label{1}
 1=\frac{q}{\beta}+\frac{q}{\beta^2}+\cdots +\frac{q}{\beta^n}.
\end{equation}
We consider representations of real numbers in non-integer base $\beta$ of the type \eqref{1}, which are called $\beta$-expansions.
Expansions in non-integer bases were firstly introduced by the seminal work of R{\'e}nyi \cite{Re}, as a generalization of the standard integer
base expansions. The original method to determine the \virg{digits} is the greedy algorithm \cite{Re,Pa, Pe}, which is tightly connected to the study of the map 
\begin{equation}
\label{eqn:Tbeta}
T_\beta:[0,1)\mapsto [0,1),\qquad T_\beta(x)=\beta x-\lfloor \beta x\rfloor,
\end{equation}
see Appendix \ref{app:gral} for some of its basic properties. Without putting certain restrictions on the coefficients, such expansions are far from being unique (see \cite{KLP} and references therein).
Such expansions are also related to symbolic dynamics \cite{Bl, Pa, Re}, which is not the  main focus of the current paper.

 We are mostly interested in the investigation of certain spectral and dynamical properties of the transfer (or Perron-Frobenius) operator $\T:L^p([0,1])\mapsto L^p([0,1])$ with $p\geq 1$, and its associated composition (or Koopman) operator $\K$, which are induced by the above map $T_\beta$ \cite{LY, Suz, Wal}.

In general, the transfer operator $\T$ describes the discrete time evolution of certain probability densities associated to some stochastic variables, evolution related to the iteration of a certain map, in our case $T_\beta$ \cite{BG,CCD,Go}. More specific details about these objects will be given in the subsequent part of the introduction, where we will also formulate our main results: Theorem \ref{thm1} and \ref{thm2}. There, it is stated that if $f$ is Lipschitz, then the iterates $\T^N f$ converge exponentially fast (in the $L^1$ norm and $N\to\infty$) to an invariant state corresponding to the eigenvalue $1$ of $\T$. On the other hand, the eigenvalue $1$ is far from being isolated: if $1\leq p\leq 2$ we show that the point spectrum of $\T$ also contains the open complex unit disk; namely, for every $|z|<1$ and we explicitly construct a corresponding $\psi_z$ such that $\T \psi_z=z\psi_z$.

\subsection{The transfer operator}
Let us assume that $X:\Omega \mapsto [0,1]$ is an absolutely continuous stochastic variable with a probability density
function (PDF) denoted by $f\in L^1([0,1])$. More precisely: for every $x\geq 0$
$$\P \big (X\leq x\big ):=\int_0^x\, f(t)\, dt.$$
Any number $X(\omega)\in (0,1)$ has a well defined \emph{\virg{greedy} decomposition} of the type (see Lemma \ref{prop10}) 
$$X(\omega)=\sum_{k\geq 1} X_k(\omega)\, \beta^{-k},\quad X_k(\omega)\in \{0,1, \dots, q\}.$$
 The first coefficient $X_1$ defines a discrete stochastic variable $X_1:\Omega \mapsto \{0,\dots, q\}$, where (remember that $q<\beta<q+1$)  
$$X_1(\omega):= j\in \{0,\dots, q\} \quad \text{whenever}\quad j/\beta \leq X(\omega)<(j+1)/\beta
$$
which implies
$$\P(X_1=j)=\P\big (j/\beta\leq X <(j+1)/\beta\big ),\quad 0\leq j\leq q.$$

Assuming that $f(x)=0$ if $x\not\in [0,1]$, then the new stochastic variable $\tilde{X}=\beta(X-X_1/\beta)$  is also absolutely continuous and has a PDF (denoted by $\T f$) which equals: 
 \begin{align}\label{3}
  (\T f)(x)=\beta^{-1}\sum_{j=0}^{q} f\big ( (j+x)/\beta \big ).
\end{align}

Formula \eqref{3} is due to the fact that for $x\geq 0$ we have:
\begin{align*}
\P\big (\beta (X- X_1/\beta ) \leq x\big )=\P\big ( X\leq (X_1+x)/\beta\big )=\sum_{j=0}^q \P\big ( j/\beta \leq X\leq  (j+x)/\beta \big ),
\end{align*}
which we then differentiate with respect to $x$.

In order to formulate our first theorem, we need to state the following result, which goes back to \cite{Pa}: 

\begin{proposition}
\label{prop:u1}
There exists a piecewise constant function $u_1$ which is positive a.e.\ with $\int_0^1 u_1(x)\, dx =1$ such that $\T u_1=u_1$. 
\end{proposition}

Our first main theorem is as follows:
\begin{theorem}\label{thm1} 
Let $n\geq 2$ and $q\geq 1$ be two integers. Let $\T\equiv\T_\beta\colon L^1([0,1])\to L^1([0,1])$ be defined as in \eqref{3}, where $\beta\equiv\beta_{n,q}$ is introduced in \eqref{eqn:beta}.
Then there exist two constants $K_1(n,q)\geq 0 $ and $K_2(n,q)\geq 1/2$ such that for every  Lipschitz function $f$ with $|f(x) - f(y)| \le L_f|x-y|$ we have 
\begin{equation}
\label{eqn:rateconv}
\left \Vert \T^N f-u_1 \int_0^1f(t)dt\right \Vert_{L^1}\leq \,  K_1 \, \big (L_f +{\Vert f\Vert}_{L^\infty} \big )\, \beta^{-K_2\, N}\qquad\forall N\geq 1.
\end{equation}
If $n=2$ we have 
\begin{equation}\label{aug1}
\beta=\frac{q+\sqrt{q^2+4q}}{2},\quad K_2=\frac{2-\ln(q)/\ln(\beta)}{3-\ln(q)/\ln(\beta)}\, .
\end{equation}
\end{theorem}

\begin{remark}
We have a few extra comments: 
\begin{enumerate}[label=(\roman*), ref=(\roman*)]
\item \label{it:pointvi} By using that the map $\T$ is non-expansive on $L^1$ (see \eqref{eqn:Pnonexp}), a density argument implies that if $f\in L^1([0,1])$, then $$\lim_{N\to \infty}\left \Vert \T^N f-u_1 \int_0^1f(t)dt\right \Vert_{L^1}=0 .$$ 

\item \label{it:uniqueu1}
Point (i) implies that the function $u_1$ constructed in  Proposition \ref{prop:u1} is, up to a constant factor, the unique $L^1$ eigenfunction of $\T$ corresponding to the eigenvalue $1$. We note that Parry \cite{Pa} also obtained an explicit formula for $u_1$ in an even more general case.  For $q=1$ (see \eqref{1}), an exponential decay in sup norm with the same exponent as ours has been previously obtained in \cite{HMS2}, but using a slightly different approach (we will explain it in a moment) and with a very different method concerning the convergence. Namely, let  
$$X=\sum_{k=1}^\infty X_k \,\beta^{-k}$$
be the $\beta$-expansion (with $q=1$) of an absolutely continuous random variable
$X$ on the unit interval. Then \cite{HMS2} analyses the convergence rate of the PDF of the scaled remainder $\sum_{k=1}^\infty X_{m+k} \beta^{-k}$ when $m$ tends to infinity to the asymptotic distribution $u_1$. If the density of $X$ is $f$, then $\T^m f$ is nothing but the density associated with the above scaled remainder.

\item  \label{it:LY}
In \cite{LY} it is shown the existence of a C{\'e}saro limit $\frac{1}{N}\sum_{k=1}^N \mathcal{P}^k f$ in the $L^1$-norm for the more general case of piecewise monotonic and expanding maps.

\item \label{it:erg} We now briefly outline some consequences for the ergodicity properties \cite{DK} of the map $T_\beta$ in \eqref{eqn:Tbeta}. It is measure preserving on $[0,1]$ equipped with the measure density $u_1$.
We consider stochastic variables of the type $F:[0,1]\mapsto \R$ with
$${\rm Prob}(F\in (c,d)):=\int_{F^{-1}((c,d))} u_1(x)dx,\quad \forall \, c<d. $$
For every integer $k\geq 0$ we define $\X_k:[0,1]\mapsto \R$  given by 
$$\X_k(x):=g(T_\beta^k(x)),
$$
for some $g\in L^p([0,1])$ with $1\leq p \leq \infty$. If $g$ is Lipschitz, by using Theorem \ref{thm1} one can prove that these random variables have the same mean value and exponentially decaying correlations, which in turn implies \cite[Theorem 1]{AA} the strong law of large numbers. 
\end{enumerate}
\end{remark}

The proof of Theorem \ref{thm1} is given in Section \ref{sect2}. 

\subsection{The composition (Koopman) operator}

Let us recall the definition of $T_\beta:[0,1)\mapsto [0,1)$ given by:  
\begin{equation}\label{march1}
T_\beta(x)=\beta x-\lfloor \beta x\rfloor= \beta x-j,\quad j/\beta\leq x<(j+1)/\beta, \quad x\in [0,1), \quad j\in\{0,1,\dots,q\}\\
.\end{equation}
%Using \eqref{march1} in \eqref{5} we see that  
We define the operator 
\begin{equation}\label{may1}
\K :L^p([0,1])\mapsto L^p([0,1]),\quad (\K \, g)(x):=g\big (T_\beta(x)\big ),\quad 1\leq p\leq \infty.
\end{equation}
We may also consider the operator $\T$ from \eqref{3} acting on $L^{p'}([0,1])$ to itself with $1/p+1/p'=1$ and $1\leq p'\leq \infty$. Then if $f\in L^{p'}([0,1])$ and $g\in L^{p}([0,1])$ we have 
\begin{equation}\label{oct1}
\begin{aligned}
\int_0^1 \overline{f(t)}\, (\K \, g)(t)\, dt 
&=\sum_{j=0}^{q-1}\int_{j/\beta}^{(j+1)/\beta} \overline{f( t)}\, g(\beta t-j) \, dt 
+\int_{q/\beta}^1 \overline{f( t)}\, g(\beta t-q)\, dt\\
&= \int_0^1 \overline{[\T f](x)}\, g(x)\, dx,
\end{aligned}
\end{equation}
where in the last equality we used that $f(x)=0$ when $x>1$. 

The main spectral results of this paper are contained in the next theorem.

\begin{theorem}\label{thm2}  The following properties hold:
\begin{enumerate}[label=(\roman*), ref=(\roman*)]
\item \label{itT2:1} Define the numbers 
$$x_j:=q\beta^{-2}+\cdots + q\beta^{-n}+j/\beta,\quad 0\leq j\leq q .$$
They obey $j/\beta<x_j<(j+1)/\beta$ when $0\leq j\leq q-1$, and $x_q=1$.

If $q=1$ we define
$$\psi_0(t)=\left\{  
\begin{matrix}
e^{\pi i \beta t} & \text{if} & j/\beta\leq t<x_j, & 0\leq j\leq 1 \\
0 & \text{if} & x_0\leq t<1/\beta &  
\end{matrix}
\right . \; .$$

If $q>1$ we define 
$$\psi_0(t)=\left\{  
\begin{matrix}
e^{2\pi i \beta t/(q+1)} & \text{if} & j/\beta\leq t<x_j, & 0\leq j\leq q \\
e^{2\pi i \beta t/q} & \text{if} & x_j\leq t<(j+1)/\beta, & 0\leq j\leq q-1 
\end{matrix}
\right . \; .$$
Then $\psi_0\in L^\infty $ and $\T \psi_0=0$ a.e.. Note that when $n\equiv\infty$ then $\beta\equiv q+1$ and $\psi_0(t)\equiv e^{2\pi i t}$. See Figure \ref{fig:2}
for an illustration of the function $\psi_0$ for the cases $q=1$ and $q=3$.
 \item \label{itT2:2} 
The operator $\widetilde{\K}:=u_1^{1/p}\, \K \, u_1^{-1/p}$ is a non-surjective isometry on $L^p([0,1])$ for $1\leq p\leq \infty$. 
\item \label{itT2:3} The spectrum of $\widetilde{\K}$ and $\K$ equals $\overline{\mathbb{D}}=\{z\in \C:\, |z|\leq 1\}$ for $1\leq p\leq \infty$.  

\item \label{itT2:4}  Let $|z|<1$. Then the function $$\psi_z=u_1^{1/2}\Big ({\rm Id} -z\, u_1^{1/2} \K u_1^{-1/2}\Big )^{-1}u_1^{-1/2}\, \psi_0 \in L^2([0,1])\subset L^{p'}([0,1]),\quad 1\leq p'\leq 2,$$
is an eigenfunction of $\T$ which obeys $\T \psi_z =z\, \psi_z$. 
\end{enumerate}
\end{theorem}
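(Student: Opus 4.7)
The plan is to prove (i), (ii), and (iv) by direct computation and to deduce (iii) from (ii) and (iv). One pointwise algebraic identity does essentially all the work:
\[
\T(f\cdot\K g)=g\cdot\T f\qquad\text{a.e.},
\]
which follows from $T_\beta\big((j+x)/\beta\big)=x$ for $x\in[0,1)$ and $j\in\{0,\ldots,q\}$ (the extension-by-zero convention handles the boundary case $(q+x)/\beta\geq 1$). Combined with $\T u_1=u_1$ from Theorem \ref{thm1}, this identity will be applied repeatedly.

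For (i), the first observation is that \eqref{1} gives $q\beta^{-2}+\cdots+q\beta^{-n}=1-q/\beta$, so $x_j=1-(q-j)/\beta$ and $\beta x_j-j=\beta-q$ independently of $j$. Thus the threshold for each summand in $(\T\psi_0)(x)=\beta^{-1}\sum_{j=0}^q\psi_0((j+x)/\beta)$ is the same: $(j+x)/\beta<x_j$ iff $x<\beta-q$. On $x\in[0,\beta-q)$ each $(j+x)/\beta$ lies in an \emph{upper branch} where $\psi_0(t)=e^{2\pi i\beta t/(q+1)}$, so the sum becomes $e^{2\pi ix/(q+1)}\sum_{j=0}^{q}e^{2\pi ij/(q+1)}=0$ by the $(q+1)$-th roots of unity. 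On $x\in[\beta-q,1)$ the $j=q$ term vanishes since $(q+x)/\beta\geq 1$ lies outside the support of $\psi_0$, while for $0\leq j\leq q-1$ each argument lies in a \emph{lower branch} with $\psi_0(t)=e^{2\pi i\beta t/q}$, producing $e^{2\pi ix/q}\sum_{j=0}^{q-1}e^{2\pi ij/q}=0$. The $q=1$ case proceeds identically with square roots of unity in place of $(q+1)$-th roots.

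For (ii), the pointwise identity $|\widetilde{\K}f|^p=u_1\cdot\K(u_1^{-1}|f|^p)$, valid because $\K$ is a composition operator and so commutes with $|\cdot|$ and powers, combined with \eqref{oct1} and $\T u_1=u_1$, yields
\[
\|\widetilde{\K}f\|_p^p=\int_0^1 u_1\,\K(u_1^{-1}|f|^p)\,dx=\int_0^1(\T u_1)\,u_1^{-1}|f|^p\,dx=\|f\|_p^p,
\]
so $\widetilde{\K}$ is an isometry on each $L^p$. Non-surjectivity follows from \eqref{oct1} and (i): $\int_0^1\overline{\psi_0}\,\K g\,dx=\int_0^1\overline{\T\psi_0}\,g\,dx=0$ for every $g$, so the nonzero functional $\psi_0\in L^\infty$ annihilates the range of $\K$, and hence also of $\widetilde{\K}$. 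For (iv), the Neumann series $(\mathrm{Id}-z\widetilde{\K})^{-1}=\sum_{k\geq 0}z^k\widetilde{\K}^k$ converges in operator norm on $L^2$ since $\|\widetilde{\K}\|=1$ and $|z|<1$; the telescoping $\widetilde{\K}^k=u_1^{1/2}\K^k u_1^{-1/2}$ reduces $\psi_z$ to $\sum_{k\geq 0}z^k\,u_1\,\K^k(u_1^{-1}\psi_0)$. Applying $\T$ term by term, the intertwining identity together with $\T u_1=u_1$ gives $\T\big(u_1\K^k(u_1^{-1}\psi_0)\big)=u_1\K^{k-1}(u_1^{-1}\psi_0)$ for $k\geq 1$; the $k=0$ term is $\T\psi_0=0$ by (i), and a reindexing yields $\T\psi_z=z\psi_z$.

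Finally (iii) combines (ii) and (iv). Since $u_1$ is piecewise constant and bounded above and below away from zero, multiplication by $u_1^{\pm 1/p}$ is a bounded automorphism of $L^p$, so $\sigma(\widetilde{\K})=\sigma(\K)$, and the isometry property gives $\sigma(\widetilde{\K})\subset\overline{\mathbb{D}}$. For the reverse inclusion, the series in (iv) in fact converges in $L^\infty$ (since $u_1\K^k(u_1^{-1}\psi_0)$ is uniformly bounded in $k$: $u_1,u_1^{-1},\psi_0\in L^\infty$ and $\K$ preserves sup norms), whence $\psi_z\in L^\infty\subset L^{p'}$. Then $u_1^{-1/p}\psi_z$ is a nonzero eigenvector of the adjoint $\widetilde{\K}^*=u_1^{-1/p}\T u_1^{1/p}$ with eigenvalue $z$ for every $1\leq p<\infty$ (and for $p=\infty$, $\psi_z\in L^1\hookrightarrow (L^\infty)^*$ is an eigenvector of $\K^*$ with eigenvalue $z$), showing $\{|z|<1\}\subset\sigma(\widetilde{\K})$; closedness of the spectrum extends this to $\overline{\mathbb{D}}$. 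The main obstacle is the case analysis in (i): it hinges on the non-obvious identity $\beta x_j-j=\beta-q$ and on the precise upper/lower branch definition of $\psi_0$ with the two different moduli $q+1$ and $q$, which are calibrated exactly so that the roots-of-unity cancellation closes on each branch.
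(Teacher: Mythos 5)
Your treatment of parts (i), (ii), and (iv) follows essentially the same route as the paper: the direct case analysis for $\T\psi_0=0$, the pointwise identity $|\widetilde{\K}f|^p=u_1\K(u_1^{-1}|f|^p)$ together with \eqref{oct1} for the isometry, annihilation of the range of $\K$ by $\psi_0$ for non-surjectivity, and the Neumann-series telescoping for $\T\psi_z=z\psi_z$. Your observation $\beta x_j-j=\beta-q$ (independent of $j$) is a clean way to see that the split into the two cases of the paper's computation is uniform, and the intertwining identity $\T(f\cdot\K g)=g\cdot\T f$ together with $\T u_1=u_1$ is the pointwise form of the paper's operator identity \eqref{hcj2}; so these are the same argument, just packaged slightly differently. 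One cosmetic slip: for $q=1$ you write that the lower branch closes via ``square roots of unity in place of $(q+1)$-th roots,'' but for $q=1$ the lower-branch sum $\sum_{j=0}^{q-1}e^{2\pi ij/q}$ is the single term $1\ne 0$, which is precisely why the paper sets $\psi_0\equiv 0$ on $[x_0,1/\beta)$; the vanishing there is trivial rather than by cancellation, so the phrasing is imprecise even if no gap results.

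Part (iii) is where you take a genuinely different route. The paper proves a short abstract Lemma \ref{lemma-may1}: a non-surjective isometry on a Banach space has spectrum equal to $\overline{\mathbb{D}}$, by noting that boundary points of the spectrum are approximate eigenvalues, which an isometry cannot have strictly inside the disk. You instead deduce (iii) from (iv): you observe (correctly, and this is not stated in the paper) that the Neumann series converges absolutely in $L^\infty$ because $u_1,u_1^{-1},\psi_0\in L^\infty$ and $\K$ is a contraction on $L^\infty$, so $\psi_z\in L^\infty$; then $u_1^{-1/p}\psi_z$ furnishes an eigenvector of the transpose $\widetilde{\K}'=u_1^{-1/p}\T u_1^{1/p}$ on $L^{p'}$ for every $p$, giving $\{|z|<1\}\subset\sigma(\widetilde{\K})$, and closedness of the spectrum together with $\Vert\widetilde{\K}\Vert=1$ finishes. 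This is correct (note the usual conjugate-linearity subtlety in the pairing \eqref{oct1} is harmless here since $\overline{\mathbb{D}}$ is conjugation-invariant), and it has the merit of producing explicit residual spectrum and the stronger regularity $\psi_z\in L^\infty$, at the cost of requiring (iv) plus the extra $L^\infty$ estimate. The paper's Lemma \ref{lemma-may1} is more economical and self-contained, and is independent of (iv). Both are valid; yours buys an improvement of the regularity claim in (iv), while the paper's buys generality and brevity.
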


The proof of this theorem is given in Section \ref{sect3}. We note that when $\T$ is restricted to functions of bounded variations, its spectrum is quite different \cite{Suz}.  

\begin{figure}
    \centering
    \includegraphics[scale=0.19]{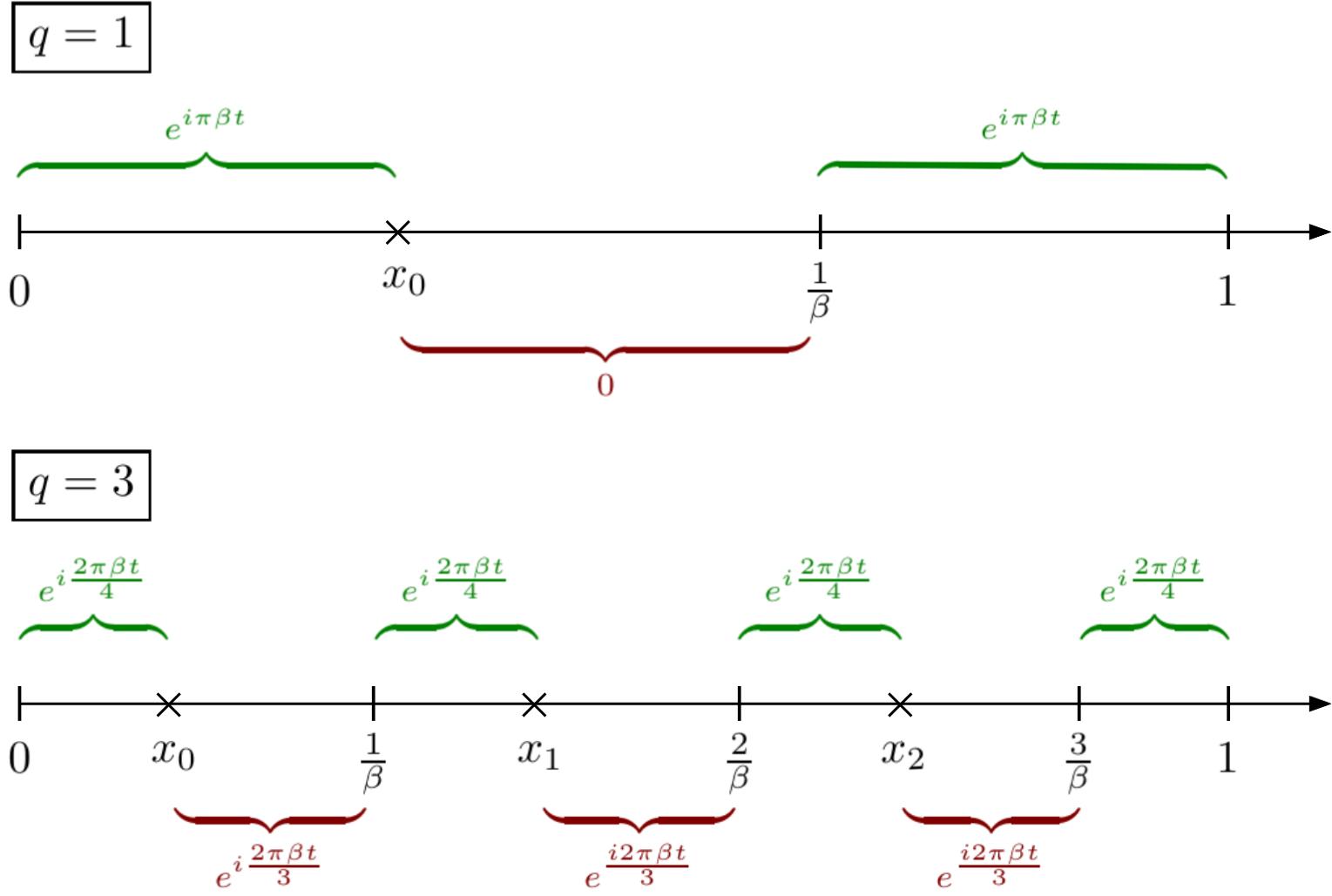}
    \caption{Illustration of the map $\psi_0$}
    \label{fig:2}
\end{figure}

\section{Proof of Theorem \ref{thm1}}\label{sect2}

\subsection{Preliminaries}
Notice that $\T$ maps non-negative functions into non-negative functions and for any function $f\in L^1([0,1])$ we have:
\begin{equation}
\label{eqn:intT}
\int_0^1 (\T f)(x)\, dx=\int_0^1 f(x)\, dx\, .
\end{equation}

Indeed, if $0\leq j\leq q-1$, we have $$[0,1]\ni x\mapsto (j+x)/\beta \in [j/\beta,(j+1)/\beta],$$ hence these intervals cover the interval $[0,q/\beta]$. Also, due to \eqref{1} we have 
$$[0,q/\beta+\cdots +q/\beta^{n-1}]\ni x\mapsto (q+x)/\beta \in [q/\beta,1].$$
Equality \ref{eqn:intT} follows after a change of variable on each interval.
Moreover, this together with $|\T f|\leq \T |f|$ imply that the linear map $\T$ is non-expansive on $L^1$, i.e.\ 
\begin{equation}
\label{eqn:Pnonexp}
\Vert\T f \Vert_{L^1}\leq \Vert f\Vert _{L^1}\;\text{for all $f\in L^1([0,1])$}.
\end{equation}

\subsection{Subdividing the interval $[0,1]$}
\begin{figure}
    \centering
    \includegraphics[scale=0.15]{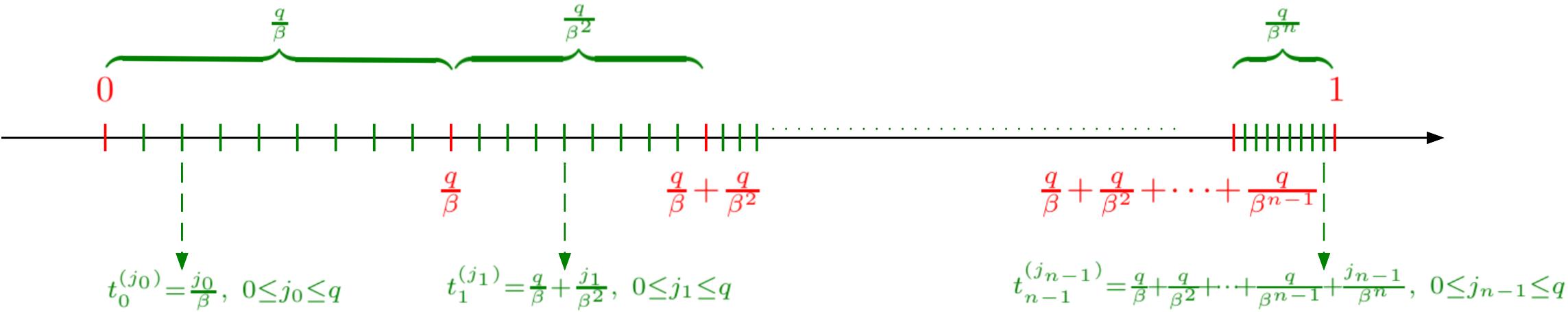}
    \caption{The first layer}
    \label{fig:1}
\end{figure}
In Figure \ref{fig:1} we introduce a decomposition of the interval $[0,1]$, which we 
%will try to
will explain in what follows. The characteristic functions of the intervals between two consecutive red points will form a generating system, and it is important to know how $\T$ acts on them. This will be done in Lemma \ref{lemmagh1}. 

First, we have the numbers in red given by $0,q/\beta$, $q/\beta+ q/\beta^2$, ..., and $q/\beta+ q/\beta^2+\cdots q/\beta^{n-1},1$.

Second, we want to define the green numbers, which include the red ones, see Figure \ref{fig:1}. Let us start with those between $0$ and $q/\beta$. For $j_0\in \{0,\dots, q\}$ we define the first set of green numbers: $t_0^{(j_0)}=j_0/\beta$, with $t_0^{(q)}=q/\beta$. The distance between two consecutive such numbers is $1/\beta$.   

The green numbers between $q/\beta$ and $q/\beta+q/\beta^2$ are indexed by 
$t_1^{(j_1)}=q/\beta +j_1/\beta^2$ where $j_1\in \{0,\dots, q\}$. The distance between two such consecutive  numbers is $1/\beta^2$.  

For the interval between $q/\beta +\cdots +q/\beta^{n-1}$ and $1$ we let  $j_{n-1}\in\{0,\dots,q\}$ and define $t_{n-1}^{(j_{n-1})}:=q/\beta +\cdots +q/\beta^{n-1}+j_{n-1}/\beta^n$. We also have the identities $t_k^{(q)}=t_{k+1}^{(0)}$ when $0\leq k\leq n-1$, and $t_{n-1}^{(q)}=1.$

The distance between two consecutive points depends on which \virg{red} interval they are situated and is given by:
$$t_{k_1}^{(j_{k_1}+1)}-t_{k_1}^{(j_{k_1})}=\beta^{-(k_1+1)},\quad 0\leq k_1\leq n-1.$$

By definition, the {\it first layer} means the set of all numbers $t_{k_1}^{(j_{k_1})}$ where $k_1\in \{0,\dots, n-1\}$ and $j_{k_1}\in \{0,\dots, q\}$. 

At this point we are able to further refine any interval between two consecutive elements of the first layer, where the endpoints $0$ and $1$ are replaced by $t_{k_1}^{(j_{k_1})}$ and $t_{k_1}^{(j_{k_1}+1)}$, and the width $1$ is replaced by $\beta^{-k_1-1}$. More precisely, the points of the \emph{second layer} are defined for $0\leq k_1,k_2\leq n-1$: 
\begin{align*}
t_{k_1\, ,\, k_2}^{(j_{k_1}\, ,\, j_{k_2})}=t_{k_1}^{(j_{k_1})}+\beta^{-(k_1+1)}t_{k_2}^{(j_{k_2})}
\end{align*}
Thus, in particular we have that
\[
t_{k_1}^{(j_{k_1})}\leq  t_{k_1\, ,\, k_2}^{(j_{k_1}\, ,\, j_{k_2})}\leq t_{k_1}^{(j_{k_1}+1)},\quad
t_{k_1\, ,\, q}^{(j_{k_1}\, ,\, n-1)}=t_{k_1}^{(j_{k_1}+1)}.
\]
In general, the \emph{$m$'th layer} consists of the points for $0\leq k_1,k_2,\dots,k_m\leq n-1$:
$$t_{k_1,k_2,\dots,k_m}^{(j_{k_1},\, j_{k_2},\dots, j_{k_m})}=t_{k_1}^{(j_{k_1})}+\beta^{-(k_1+1)}t_{k_2}^{(j_{k_2})}+\dots +\beta^{-(k_1+1)}\cdots \beta ^{-(k_{m-1}+1)}t_{k_m}^{(j_{k_m})}.$$

We now introduce the $L^1$ normalized indicator functions of intervals between two \virg{consecutive points} of layer $m$ denoted by:
\begin{equation}\label{11}
F_{k_1,k_2,\dots, k_m}^{(j_{k_1},\, j_{k_2},\dots, j_{k_m})}(x) =\beta^{k_1+1}\dots \beta^{k_{m}+1}\, \chi_{\big [t_{k_1,k_2,\dots,k_m}^{(j_{k_1},\, j_{k_2},\dots, j_{k_m})}\, , \, t_{k_1,k_2,\dots,k_m}^{(j_{k_1},\, j_{k_2},\dots, j_{k_m}+1)}\big ]}(x).
\end{equation}

Finally, let us introduce a special notation for the red numbers 
%together with
including the endpoints $0$ and $1$. They are:   $$t_0:=t_0^{(0)}=0,\quad  t_1:=t_0^{(q)}=t_1^{(0)}=q/\beta,\quad t_2:=t_1^{(q)}=t_2^{(0)}=q/\beta +q/\beta^2,\quad \dots,$$ $$t_{n-1}:=t_{n-2}^{(q)}=t_{n-1}^{(0)}=q/\beta +\cdots +q/\beta^{n-1},\quad  \text{and} \quad t_n:=t_{n-1}^{(q)}=1.$$ The two very last notations give the $L^1$ normalized indicator functions of the intervals between two such consecutive points: 
\begin{equation}\label{10}
F_r(x):=q^{-1}\sum_{j=0}^{q-1} F_r^{(j)}(x)= q^{-1}\beta^{r+1}\chi_{[t_r,t_{r+1}]}(x),\quad 0\leq r\leq n-1.
\end{equation}

\begin{lemma}\label{lemmagh1}
We have  $$\T F_0=\chi_{[0,1]}=q\sum_{j=0}^{n-1} \beta^{-(j+1)} F_{j},\,\text{and}\quad   \T F_r=F_{r-1}\quad \text{where}\quad  1\leq r\leq n-1.$$ In particular, the subspace generated by these functions is invariant under the action of $\T$, namely  $\T \left( {\rm span}\{F_0,\dots,F_{n-1}\}\right)\subseteq {\rm span}\{F_0,\dots,F_{n-1}\}. $ 
%, and the matrix of the restriction is left-stochastic. 

Moreover, for all $m\geq 2$ and all possible tuples  $(j_{k_1},\, j_{k_2},\dots, j_{k_m})\in \{{0,\dots,q\}}^m$ we have:  
\begin{equation}\label{may100}
\T F_{k_1,k_2,\dots,k_m}^{(j_{k_1},\, j_{k_2},\dots, j_{k_m})}=F_{k_2,\dots,k_m}^{(j_{k_2},\dots, j_{k_m})} \quad \text{if}\quad k_1=0,
\end{equation} 
\begin{equation}\label{may101}
\T F_{k_1,\, k_2,\dots,k_m}^{(j_{k_1},\, j_{k_2},\dots, j_{k_m})}=F_{k_1-1,k_2,\dots,k_m}^{(j_{k_1},\, j_{k_2},\dots, j_{k_m})} \quad \text{if}\quad k_1\geq 1,
\end{equation} 
and 
\begin{equation}\label{may10}
\T^{m-1+k_1+k_2+\cdots +k_{m-1}} F_{k_1,\dots, k_m}^{(j_{k_1},\, j_{k_2},\dots, j_{k_m})}\in {\rm span}\{F_0,F_1, \dots,F_{n-1}\}.
\end{equation}
\end{lemma}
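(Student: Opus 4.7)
The proof plan is direct computation from
\[
(\T f)(x) = \beta^{-1}\sum_{j=0}^q f\big((j+x)/\beta\big),
\]
combined with careful support analysis of the characteristic functions involved.

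For the base identities $\T F_0$ and $\T F_r$ with $1\leq r\leq n-1$, the key arithmetic input is the recurrence $\beta t_r - q = t_{r-1}$ (with $\beta t_0=0$), obtained by telescoping the defining equation \eqref{1} for $\beta$. When $r\geq 1$, the support of $F_r$ lies in $[q/\beta,1]$, and a short check shows $\beta t_r - j \geq 1$ whenever $j\leq q-1$, with equality only at measure-zero boundaries. Hence only $j=q$ contributes in $(\T F_r)(x)$, and $\beta t_r - q = t_{r-1}$ (together with the analogous $\beta t_{r+1}-q = t_r$) yields $\T F_r = F_{r-1}$. When $r=0$, the support of $F_0$ is $[0,q/\beta]$, and the same support test now shows that each $j=0,1,\dots,q-1$ contributes the constant $q^{-1}$ for $x\in(0,1]$ while $j=q$ contributes only on a null set; summing gives $\T F_0 = \chi_{[0,1]}$, which is then rewritten as $q\sum_{r=0}^{n-1}\beta^{-(r+1)}F_r$ using the red partition. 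Invariance of the span follows by linearity.

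For the multi-layer identities \eqref{may100} and \eqref{may101}, the strategy is identical, with the role of $\beta t_r - q = t_{r-1}$ now played by
\[
\beta\,t_{k_1}^{(j_{k_1})} - q = t_{k_1-1}^{(j_{k_1})}\qquad (k_1\geq 1),
\]
obtained by writing $t_{k_1}^{(j_{k_1})} = t_{k_1} + j_{k_1}\beta^{-(k_1+1)}$ and invoking the base recurrence. The support of $F_{k_1,\dots,k_m}^{(j_{k_1},\dots,j_{k_m})}$ lies inside $[t_{k_1}^{(j_{k_1})},t_{k_1}^{(j_{k_1}+1)}]$, which is contained in $[j_{k_1}/\beta,(j_{k_1}+1)/\beta]$ when $k_1=0$ and in $[q/\beta,1]$ when $k_1\geq 1$. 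The same support analysis then shows that only $\ell=j_{k_1}$, respectively $\ell=q$, contributes to the sum defining $\T$. Pulling back the endpoints through $y\mapsto\beta y-\ell$ produces exactly the layer points of the claimed right-hand side, with the first pair of indices $(k_1,j_{k_1})$ either dropped (when $k_1=0$) or with $k_1$ decremented (when $k_1\geq 1$); the normalization prefactor works out automatically since each application of $\T$ contributes a factor $\beta^{-1}$ that cancels one $\beta^{k_i+1}$.

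Finally, \eqref{may10} follows by iteration of the previous two identities: repeated use of \eqref{may101} reduces $k_1$ to zero in $k_1$ steps, after which one application of \eqref{may100} drops the function to layer $m-1$ with indices $(k_2,\dots,k_m)$. Iterating layer by layer, one reaches a first-layer function after $(k_1+1)+\dots+(k_{m-1}+1) = m-1+k_1+\dots+k_{m-1}$ applications; from there the already-established base identities carry everything into ${\rm span}\{F_0,\dots,F_{n-1}\}$. The main obstacle is the bookkeeping — keeping the multi-indices consistent through the layer-stripping, and verifying the boundary cases in the support analysis (in particular, showing that contributions from $j<q$ are confined to null sets when the support sits in $[q/\beta,1]$). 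Apart from this, no deeper difficulty arises.
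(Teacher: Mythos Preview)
Your proposal is correct and follows essentially the same approach as the paper: direct computation of $\T$ on the normalized indicator functions via the key arithmetic identity $\beta t_r - q = t_{r-1}$ (and its layered analogue $\beta\, t_{k_1}^{(j_{k_1})} - q = t_{k_1-1}^{(j_{k_1})}$), together with a support analysis that isolates the single surviving term in the sum over $j$, followed by iteration of \eqref{may100}--\eqref{may101} to obtain \eqref{may10}. The only cosmetic difference is that the paper first proves the finer identities $\T F_0^{(j_0)} = \chi_{[0,1]}$ and $\T F_{k_1}^{(j_{k_1})} = F_{k_1-1}^{(j_{k_1})}$ and then averages over $j_{k_1}$ to obtain $\T F_0$ and $\T F_r$, whereas you compute $\T F_r$ directly from \eqref{10}.
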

\begin{proof}
For $x\in [0,1]$ we have
\begin{align*}
&\chi_{\big [t_{k_1,\dots,k_m}^{(j_{k_1},\, j_{k_2},\dots, j_{k_m})}\, , \, t_{k_1,\dots,k_m}^{(j_{k_1},\, j_{k_2},\dots, j_{k_m}+1)}\big ]}\big ((x+j)/\beta\big )\\ 
&\qquad \qquad =\chi_{[0,1]}(x)\, \chi_{\big [\beta\, t_{k_1,\dots,k_m}^{(j_{k_1},\, j_{k_2},\dots, j_{k_m})}-j\, , \, \beta\, t_{k_1,\dots,k_m}^{(j_{k_1},\, j_{k_2},\dots, j_{k_m}+1)}-j\big ]}(x),
\end{align*}
which introduced in \eqref{3} gives for the functions $F_{k_1, k_2,\dots,k_m}^{(j_{k_1},\, j_{k_2},\dots, j_{k_m})}$ defined in \eqref{11}:
\begin{equation}\label{gh1}
\begin{aligned}
 &(\T F_{k_1,k_2,\dots,k_m}^{(j_{k_1},\, j_{k_2},\dots, j_{k_m})})(x)=\beta^{-1}\beta^{k_1+1}\dots \beta^{k_{m}+1}\, \times \\
 &\quad \times \, \chi_{[0,1]}(x)\,\sum_{j=0}^{q}\chi_{\big [\beta\, t_{k_1,\dots,k_m}^{(j_{k_1},\, j_{k_2},\dots, j_{k_m})}-j\, , \, \beta\, t_{k_1,\dots,k_m}^{(j_{k_1},\, j_{k_2},\dots, j_{k_m}+1)}-j\big ]}(x).
\end{aligned}
\end{equation}
{\bf First consider $m=1$.} We start by computing $\T F_0^{(j_0)}$, thus we put $m=1$ and $k_1=0$. Then $\beta t_0^{(j_0)}=j_0\in\{0,\dots,q-1\}$ and 
$$\chi_{[\beta t_0^{(j_0)}-j,\beta t_0^{(j_0+1)}-j]}(x)=\chi_{[j_0-j,j_0-j+1]}(x).$$
By summing over $j$ in  \eqref{gh1} we get 
$$\T F_0^{(j_0)}=\chi_{[0,1]},\quad 0\leq j_0\leq q-1.$$
Since the above formula is independent of $j_0$, it also implies that $\T F_0=\chi_{[0,1]}$, see \eqref{10} for the definition of $F_0$. 

We now want to compute $\T F_{k_1}^{(j_{k_1})}$ with $0<k_1\leq n-1$. Since $k_1\geq 1$ then $\beta t_{k_1}^{(j_{k_1})}\geq q$ and so the interval $[\beta t_{k_1}^{(j_{k_1})}-j, \beta t_{k_1}^{(j_{k_1}+1)}-j]$ is disjoint from $[0,1]$ if $j\leq q-1$. On the other hand, since 
$$t_{k_1}^{(j_{k_1})}=q/\beta+\cdots +q/\beta^{k_1}+j_{k_1}/\beta^{k_1+1}$$
we have  

$$0\leq \beta t_{k_1}^{(j_{k_1})}-q=t_{k_1-1}^{(j_{k_1})}<t_{k_1-1}^{(j_{k_1}+1)}=\beta t_{k_1}^{(j_{k_1}+1)}-q\leq 1.$$ 
This implies that 
$$\T F_{k_1}^{(j_{k_1})}=F_{k_1-1}^{(j_{k_1})},\quad 1\leq k_1\leq n-1,\quad 0\leq j_{k_1}\leq q-1.$$
This shows that $\T^{1+k_1}F_{k_1}^{(j_{k_1})}=\T F_0^{(j_{k_1})}=\chi_{[0,1]}$ belongs to the subspace spanned by $F_0,\dots, F_{n-1}$ (see \eqref{10}). 
Applying $\T$ to \eqref{10} we obtain
$$\T F_r=F_{r-1},\quad 1\leq r\leq n-1.$$
This ends the proof of the first part of the lemma. 

{\bf Now let us consider $m>1$, i.e.\ more than just one layer}.  
\begin{itemize}
\item If $k_1=0$ then 
\begin{align*}\beta\, t_{0,\dots,k_m}^{(j_{0},\, j_{k_2},\dots, j_{k_m})}-j&= \beta(j_0/\beta+\beta^{-1} t_{k_2}^{(j_{k_2})}+\cdots +\beta^{-1}\cdots \beta^{-(k_{m-1}+1)}t_{k_m}^{(j_{k_m})})-j\\
&=j_0-j+t_{k_2,\dots,k_m}^{(j_{k_2},\dots, j_{k_m})}
\end{align*}
which introduced in \eqref{gh1} gives:
$$\T F_{0,k_2,\dots,k_m}^{(j_{0},\, j_{k_2},\dots, j_{k_m})}=F_{k_2,\dots,k_m}^{(j_{k_2},\dots, j_{k_m})}.$$
This shows that if we apply $\T$ on a function with $k_1=0$, then we go down to a lower layer where $m$ is replaced by $m-1$ and $j_0$ is \virg{erased}. This proves \eqref{may100}. 
\item If $1\leq k_1\leq n-1$ then $\beta\, t_{k_1,\dots,k_m}^{(j_{k_1},\, j_{k_2},\dots, j_{k_m})}\geq q$ and so the sum over $j\leq q-1$ in \eqref{gh1} equals zero. On the other hand, 
\begin{align*}
0\leq \beta t_{k_1,\, k_2,\dots,k_m}^{(j_{k_1},\, j_{k_2},\dots, j_{k_m})}-q&=t_{k_1-1,k_2,\dots,k_m}^{(j_{k_1},\, j_{k_2},\dots, j_{k_m})}<t_{k_1-1,k_2,\dots,k_m}^{(j_{k_1},\, j_{k_2},\dots, j_{k_m}+1)}\\
&=\beta t_{k_1,\, k_2,\dots,k_m}^{(j_{k_1},\, j_{k_2},\dots, j_{k_m}+1)}-q\leq 1,\end{align*}
hence 
$$\T F_{k_1,\, k_2,\dots,k_m}^{(j_{k_1},\, j_{k_2},\dots, j_{k_m})}=F_{k_1-1,k_2,\dots,k_m}^{(j_{k_1},\, j_{k_2},\dots, j_{k_m})}.$$
This shows that when we apply $\T$ on a function of the type \eqref{11} with $k_1>0$, then $k_1$ is reduced with one unit. This proves \eqref{may101}. 
\end{itemize}
Conclusion: it takes  $k_1+1$ applications of $\T$ in order to go down from layer $m$ to layer $m-1$, then $k_2+1$ applications in order to get from layer $m-1$ to layer $m-2$, so $\T^{k_1+k_2+\cdots k_{m-1}+m-1}$ gets us to the lowest layer with $m=1$.
\end{proof}

\subsubsection{Proof of Proposition \ref{prop:u1}} 

\begin{lemma}\label{lemmagh3}
Denote by $\mathcal{T}$ the $n\times n$ matrix obtained by restricting $\T$ to the subspace generated by $\{F_0,\dots,F_{n-1}\}$. Then $\mathcal{T}$ is a left-stochastic matrix. If $\lambda$ is an eigenvalue, then it obeys the equation $P_{n,q}(\lambda \beta)=0$ with $P_{n,q}$ from Lemma \ref{lemmagh2}. For $\lambda_1=1$ we can construct a positive eigenvector. If $\lambda_2$ is the second largest eigenvalue in absolute value, then 
\begin{equation}\label{gh10}
q^{1/(n-1)}\beta^{-n/(n-1)}\leq |\lambda_2|<\beta^{-1}.
\end{equation}
There exists an explicitly computable piecewise constant function $u_1$ which is positive a.e.\ such that 
  \begin{equation}\label{dhc2}
     \T u_1=u_1,\quad   u_1\in {\rm span}\{F_0,\dots,F_{n-1}\},\quad \int_0^1 u_1(x)dx=1\, .
  \end{equation}
Moreover, there exists $C<\infty $ such that for every $r\in \mathbb{N}$ and any $g\in {\rm span}\{F_0,\dots,F_{n-1}\}$  we have: 
\begin{equation}\label{dhc3}
      \Big \Vert (\T^r g)(\cdot )-u_1(\cdot) \int_0^1 g(t)dt \Big \Vert_{L^\infty}\leq C\, |\lambda_2|^r \|g\|_{L^1}.
  \end{equation}
  
\end{lemma}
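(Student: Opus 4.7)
The idea is to use Lemma~\ref{lemmagh1} to turn the restriction of $\T$ to $\mathrm{span}\{F_0,\dots,F_{n-1}\}$ into an explicit $n\times n$ companion-like matrix $\mathcal{T}$, read its spectrum off a polynomial equation, recover $u_1$ as the Perron eigenvector, and finally convert the spectral gap into the rate (\ref{dhc3}).

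The first step is to read the matrix entries off Lemma~\ref{lemmagh1}: in the ordered basis $\{F_0,\dots,F_{n-1}\}$ the column associated with $F_0$ is $(q\beta^{-1},q\beta^{-2},\dots,q\beta^{-n})^T$, while for $r\ge 1$ the column associated with $F_r$ is the standard basis vector $e_{r-1}$. All column sums equal $1$: the first by (\ref{1}) and the others trivially, so $\mathcal{T}$ is left-stochastic. Writing $\mathcal{T}v=\lambda v$ component-wise yields the forward recursion $v_{r+1}=\lambda v_r-v_0\,q\beta^{-(r+1)}$ closed by $\lambda v_{n-1}=v_0\,q\beta^{-n}$; eliminating $v_1,\dots,v_{n-1}$ shows that nontrivial solutions exist iff $P_{n,q}(\beta\lambda)=0$, where $P_{n,q}$ is the polynomial of Lemma~\ref{lemmagh2}. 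Since $\beta$ is a root of $P_{n,q}$ by (\ref{1}), $\lambda_1=1$ is an eigenvalue; plugging $\lambda=1,\,v_0=1$ into the recursion gives the explicit strictly positive eigenvector $v_r=\sum_{k=1}^{n-r}q\beta^{-(r+k)}$, and its $L^1$-normalisation $u_1=(\sum_r v_r)^{-1}\sum_r v_r F_r$ is piecewise constant, positive a.e.\ and satisfies (\ref{dhc2}), because each $F_r$ integrates to $1$.

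For the eigenvalue estimates (\ref{gh10}), Lemma~\ref{lemmagh2} is assumed to state that $\beta$ is the only root of $P_{n,q}$ of modulus $\ge\beta$ (equivalently, the only root of modulus $\ge 1$ after dividing by $\beta$), which rescales to $|\lambda_j|<\beta^{-1}$ for $j\ge 2$. The lower bound is a Vieta/AM-GM argument: $P_{n,q}$ has constant term $-q$, so the product of its roots has modulus $q$; factoring out $\beta$ and dividing by $\beta^{n-1}$, the product $|\lambda_2\cdots\lambda_n|$ equals $q\beta^{-n}$, whence at least one factor has modulus $\ge(q\beta^{-n})^{1/(n-1)}=q^{1/(n-1)}\beta^{-n/(n-1)}$.

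Finally, for (\ref{dhc3}) I decompose the invariant subspace as $\C u_1\oplus W$ with $W:=\{g\in\mathrm{span}\{F_0,\dots,F_{n-1}\}:\int_0^1 g=0\}$; this splitting is preserved by $\T$ thanks to point \ref{it:2}, the associated projection is $\Pi g=u_1\int_0^1 g$, and the spectrum of $\mathcal{T}$ restricted to $W$ equals $\{\lambda_2,\dots,\lambda_n\}$. Since $W$ is finite-dimensional, all norms on it are equivalent, and the Jordan form of $\mathcal{T}|_W$ yields $\|\mathcal{T}^r|_W\|_{L^1\to L^\infty}\le C|\lambda_2|^r$ after absorbing any polynomial prefactor into $C$ (this is clean if $\mathcal{T}$ is diagonalisable, i.e.\ if $P_{n,q}$ has simple roots; otherwise one slightly enlarges $|\lambda_2|$). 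Applying this to $g-\Pi g$ gives (\ref{dhc3}). The main obstacle is the \emph{strict} upper bound $|\lambda_2|<\beta^{-1}$, which is the Parry/Pisot-type content supplied by Lemma~\ref{lemmagh2}; a secondary but routine subtlety is the possible Jordan-block factor at $\lambda_2$ discussed above.
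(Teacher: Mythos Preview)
Your approach is essentially the paper's: read off the companion-like matrix from Lemma~\ref{lemmagh1}, identify the characteristic polynomial with $\beta^{-n}P_{n,q}(\beta\lambda)$, use Vieta for the lower bound on $|\lambda_2|$, solve the recursion for the positive eigenvector, and use the spectral decomposition on the invariant subspace for \eqref{dhc3}. The paper expands the determinant row by row where you run the eigenvector recursion, and it writes out all the rank-one Riesz projections $\Pi_j$ explicitly where you work with the single complementary subspace $W=\{g:\int g=0\}$; these are cosmetic differences.

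There is one genuine slip in your treatment of \eqref{dhc3}. You write that the Jordan form gives $\|\mathcal{T}^r|_W\|_{L^1\to L^\infty}\le C|\lambda_2|^r$ ``after absorbing any polynomial prefactor into $C$''. That is false: a nontrivial Jordan block at $\lambda_2$ produces a factor $r^{k}|\lambda_2|^r$, which is not dominated by any fixed multiple of $|\lambda_2|^r$. Your fallback of ``slightly enlarging $|\lambda_2|$'' would then yield only $C_\epsilon(|\lambda_2|+\epsilon)^r$, which is \emph{not} the statement \eqref{dhc3}. The correct resolution, and the one the paper uses, is to invoke Lemma~\ref{lemmagh2}\ref{itB:2}: all roots of $P_{n,q}$ are simple, hence all eigenvalues of $\mathcal{T}$ are simple, $\mathcal{T}$ is diagonalisable, and the spectral projections are genuinely rank one with no polynomial growth. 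Once you cite that, your argument for \eqref{dhc3} goes through cleanly.
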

\begin{proof}
    We have 
    \begin{align*}
    \T F_{j-1} =\sum_{i=1}^n \mathcal{T}_{ij} \, F_{i-1},\quad 1\leq j\leq n,\qquad 
\mathcal{T}=\begin{bmatrix} q\beta^{-1} & 1 & 0& \dots &0 &0 \\
q\beta^{-2} & 0 & 1 &\dots &0&0 \\
\vdots &\vdots&\vdots&\vdots&\vdots&\vdots   \\
q\beta^{-(n-1)} & 0 & 0 &\dots &0&1\\
q\beta^{-n} & 0 & 0 &\dots &0&0 
\end{bmatrix},
    \end{align*}
then $\mathcal{T}$ is left-stochastic by \eqref{1}. Observe that
    \begin{align*}
z\, {\rm Id}_n-\mathcal{T}=\begin{bmatrix} z-q\beta^{-1} & -1 & 0& \dots &0 &0 \\
-q\beta^{-2} & z & -1 &\dots &0&0 \\
\vdots &\vdots&\vdots&\vdots&\vdots&\vdots  \\
-q\beta^{-(n-1)} & 0 & 0 &\dots &z&-1\\
-q\beta^{-n} & 0 & 0 &\dots &0&z
\end{bmatrix}.
    \end{align*}
    Expanding the determinant with respect to the first row we get 
    $${\rm det}\big (z\, {\rm Id}_n-\mathcal{T}\big )=(z-q\beta^{-1})z^{n-1}+ {\rm det}(\mathcal{T}_{n-1})$$
    where 
\begin{align*}
\mathcal{T}_{n-1}=\begin{bmatrix} 
-q\beta^{-2} & -1 &\dots &0&0 \\
-q\beta^{-3} & z &-1& \dots &0\\
\vdots &\vdots&\vdots&\vdots&\vdots \\
-q\beta^{-(n-1)} & 0 &\dots &z&-1\\
-q\beta^{-n} & 0 &\dots &0&z
\end{bmatrix}.
    \end{align*}
    By recursion we get 
    $${\rm det}\big (z\, {\rm Id}_n-\mathcal{T}\big )=(z-q\beta^{-1})z^{n-1}-q\beta^{-2}z^{n-2}-\cdots -q\beta^{-(n-1)}z-q\beta^{-n}=\beta^{-n}P_{n,q}(z\beta).$$
    Thus $\lambda$ is an eigenvalue if and only if $\lambda \beta$ is a zero of $P_{n,q}$, hence all eigenvalues are simple due to Lemma \ref{lemmagh2}\ref{itB:1} and \ref{itB:3}. While $\lambda_1=1$ (notice that $\lambda_1=1$ is an eigenvalue due to \eqref{1}), all other eigenvalues are in absolute value less than $\beta^{-1}<1$ due to Lemma \ref{lemmagh2}(iii). Since the product of all roots of $P_{n,q}$ must equal $(-1)^{n-1}q$, we have 
    $$\beta\,|\beta \lambda_2|\cdots |\beta \lambda_n|=q. $$
    If $\lambda_2$ has the second largest modulus, we have 
    $q\leq \beta^n |\lambda_2|^{n-1}$,
    which proves the lower bound in \eqref{gh10}. 

Now let us compute an eigenfunction corresponding to the eigenvalue $1$. We solve the system 
\begin{align*}
\begin{bmatrix} 1-q\beta^{-1} & -1 & 0& \dots &0 &0 \\
-q\beta^{-2} & 1 & -1 &\dots &0&0 \\
\vdots &\vdots&\vdots&\vdots&\vdots&\vdots  \\
-q\beta^{-(n-1)} & 0 & 0 &\dots &1&-1\\
-q\beta^{-n} & 0 & 0 &\dots &0&1
\end{bmatrix} \begin{bmatrix} s_1\\ s_2\\ \vdots \\ s_{n-1}\\ s_n\end{bmatrix}=\begin{bmatrix} 0\\ 0\\ \vdots \\ 0\\ 0\end{bmatrix}.
    \end{align*}
    We may choose $s_1$ as a free variable. In that case we may choose: 
    \begin{align*}
    s_1&=1,\\
    s_2&=1-q\beta^{-1},\\ 
    s_3&=1-q\beta^{-1}-q\beta^{-2},\\
    &... \\
    s_n&=1-q\beta^{-1}-\cdots -q\beta^{n-1}=q\beta^{-n}.
    \end{align*}
    Now let us define (see \eqref{10}) $\tilde{F}_k(x)=\sqrt{q}\, \beta^{-(k+1)/2}F_k(x)$ for $0\leq k\leq n-1$. They form an $L^2$-orthonormal basis in the span of $\{F_0,\dots, F_{n-1}\}$. The restriction of $\T$ to this subspace, in the new basis, will have a matrix (here $1\leq i,j\leq n$)
    \begin{align*}
\widetilde{\mathcal{T}}_{ij}&:=\langle \tilde{F}_{i-1},\T\tilde{F}_{j-1}\rangle = \sqrt{q}\,\beta^{-j/2}\langle \tilde{F}_{i-1},\T{F}_{j-1}\rangle =\sqrt{q}\,\beta^{-j/2}\sum_{r=1}^n\mathcal{T}_{rj}\, \langle \tilde{F}_{i-1},F_{r-1}\rangle\\
&=\beta^{i/2}\, \mathcal{T}_{ij}\, \beta^{-j/2}\,.  
\end{align*}
Since $\mathcal{T}$ and $\widetilde{\mathcal{T}}$ are similar, $\widetilde{\mathcal{T}}$ has the same spectrum as $\mathcal{T}$. Moreover, the vector $\tilde{s}$ with coordinates $\tilde{s}_j=\beta^{j/2}s_j$, where $1\leq j\leq n$, is a not-normalized eigenvector of $\widetilde{\mathcal{T}}$ corresponding to the eigenvalue $1$. The adjoint matrix $\widetilde{\mathcal{T}}\, ^*$ has the matrix elements 
$$\Big (\widetilde{\mathcal{T}}^*\Big )_{ij}=\widetilde{\mathcal{T}}_{ji}=\beta^{j/2}\, \mathcal{T}_{ji}\, \beta^{-i/2}.$$
  By direct computation, using that $\sum_{j=1}^n\mathcal{T}_{ji}=1$ for all $i$, we can check that the vector $\tilde{t}$ with entries $\tilde{t}_j=\beta^{-j/2}$ is an eigenvector of $\widetilde{\mathcal{T}}^*$ corresponding to the same eigenvalue $1$.  
  
  Getting back to functions, the operator $\T$ has an eigenfunction $u(x)$ corresponding to eigenvalue $1$ given a.e.\ by 
  $$u(x)=\sum_{j=1}^n \tilde{s}_j \, \tilde{F}_{j-1}(x)=\sqrt{q}\sum_{j=1}^ns_jF_{j-1}(x)>0,$$
  and we denote by 
  \begin{equation*}
      u_1(x):=\frac{u(x)}{\int_0^1 u(t)dt},\quad \int_0^1 u_1(x)dx=1,
  \end{equation*}
  which satisfies \eqref{dhc2}. 
  
  Using the information we have about the eigenvector $\tilde{t}$ of $\widetilde{\mathcal{T}}^*$, the adjoint $\T^*$ of $\T$ seen as an operator on the span of $\{F_0,\dots,F_{n-1}\}$ has an eigenfunction 
  $$w(x)=\sum_{j=1}^n \tilde{t}_j\, \tilde{F}_{j-1}(x)=\sqrt{q}\, \sum_{j=1}^n \beta^{-j} F_{j-1}(x)=q^{-1/2}\, \chi_{[0,1]}(x),\quad \T^*\chi_{[0,1]}=\chi_{[0,1]}.$$
  Then the rank-one Riesz projection corresponding to the eigenvalue $1$ can be written as 
  $$\Pi_1=|u_1\rangle \langle \chi_{[0,1]}|,\quad \Pi_1^2=\Pi_1.$$
  Moreover, we may write 
  $$\T|_{{\rm span}\{F_0,\dots,F_{n-1}\}}=\Pi_1+\sum_{j=2}^n \lambda_j \Pi_j$$
  where each projection has rank one and $\Pi_j\Pi_k=\delta_{jk}\Pi_k$. Now if $g$ is in the span of $\{F_0,\dots,F_{n-1}\}$ we have 
  $$\T^r g= u_1 \int_0^1 g(t)dt +\sum_{j=2}^n \lambda_j^r \, \Pi_j g.$$
  Since each $\Pi_j$ is a rank one operator of the form 
  $$ \big (1/\langle v_j, u_j\rangle_{L^2}\big )\, |u_j\rangle \, \langle v_j| $$
  with $u_j$ and $v_j$ bounded functions in the span of $\{F_0,\dots,F_{n-1}\}$, we have 
  $$\Vert \Pi_j g\Vert_{L^\infty}\leq C\, \Vert g\Vert_{L^1},\quad 2\leq j\leq n.$$
\end{proof}

\subsubsection{Finalizing the proof of Theorem \ref{thm1}} 
The first step is to approximate $f$ with piecewise constant functions using its Lipschitz property. For example, using the first layer in Figure \ref{fig:1} we have (in the sup-norm)
$$f-\sum_{k_1=0}^{n-1} \sum_{j_{k_1}=0}^{q-1}f\big (t_{k_1}^{(j_{k_1})}\big ) \, \beta^{-1-k_1}\, F_{k_1}^{(j_{k_1})}= \mathcal{O}( L_f\,\beta^{-1}),$$
where $F_{k_1}^{(j_{k_1})}$ is defined in \eqref{11}.
The error is largest on the interval between $0$ and $q/\beta$, because the distance between two consecutive points is only $\beta^{-1}$. On the other intervals, where $k_1\geq 1$, the distance between two consecutive points is at least $\beta^{-2}$ and the error is of order $\beta^{-2}$ or better. 

It is possible to improve the above estimate and get a global error of order $\beta^{-2}$. To achieve this, we have to refine the interval $[0,q/\beta]$ by going to the second layer, while keeping unchanged the other intervals where $k_1\geq 1$. This leads to:
\begin{align*}
f- \sum_{j_{0}=0}^{q-1}\sum_{k_2=0}^{n-1}\sum_{j_{k_2}=0}^{q-1}f\big (t_{0,k_2}^{(j_0, j_{k_2})}\big ) \, \beta^{-2-k_2}\, F_{0,k_2}^{(j_{0}, j_{k_2})}-\sum_{k_1=1}^{n-1} \sum_{j_{k_1}=0}^{q-1}f\big (t_{k_1}^{(j_{k_1})}\big ) \, \beta^{-1-k_1}\, F_{k_1}^{(j_{k_1})}= \mathcal{O}(L_f \beta^{-2}).
\end{align*}

If we want a global error of order $\beta^{-3}$, we need to go up to the third layer on the subintervals where $k_2=0$ in the triple sum, and to the second layer  on the subintervals where $k_1=1$ in the double sum.  

If we want a global error of order $\beta^{-n-1}$, even the old subinterval $[1-q/\beta^n,1]$ corresponding to $k_1=n-1$ in the first layer has now to be refined with a second layer. 

In the general case, let us fix some integer $M\geq n+1$ and let us investigate in which way we should split the interval $[0,1]$ so that the error we make is not bigger than $\beta^{-M+n}$. From the above discussion, this amounts to adjust the length of the subintervals obtained by picking points from different layers.  

For a given layer of order $m\geq 1$, the support of $F_{k_1,\dots,k_m}^{(j_{k_1},\,\dots, j_{k_m})}$ has a width of $\beta^{-m-k_1-\cdots -k_m}$. 
We have the following double inequality: 
\begin{equation}\label{may55}
\begin{aligned}
 k_1+k_2+\cdots +k_m+m &<k_1+k_2+\cdots +k_m +k_{m+1}+(m+1) \\
 &\leq k_1+k_2+...+k_m+m +n, 
\end{aligned}
\end{equation}
where the first one is trivial while the second one is due to $k_{m+1}\leq n-1$. 

Remember that $M\geq n+1$. The first layer has $m=1$ with $k_1+1<M$ because $k_1\leq n-1$. By refining each subinterval of layer $1$ by adding points of higher layers, we  have two alternatives: 
\begin{itemize}
\item Either: $$k_1+k_2+\cdots +k_m +k_{m+1}+(m+1)<M$$
\item Or: $$k_1+k_2+\cdots +k_m+m <M\leq k_1+k_2+\cdots +k_m +k_{m+1}+(m+1).$$
\end{itemize}
If the first alternative is realized, then we perform another refinement. If the second alternative is realized, (this must happen at some point), then by coupling it with \eqref{may55} we obtain
\begin{equation}\label{dhc10}
k_1+k_2+...+k_m+m< M\leq n+k_1+k_2+...+k_m+m .
\end{equation}
No further refinement is performed on a subinterval where \eqref{dhc10} holds. Also, when \eqref{dhc10} is satisfied, we write
$$m+k_1+\cdots +k_m\approx M.$$
Replacing $f$ on the support of $\chi_{\big [t_{k_1,\dots,k_m}^{(j_{k_1},\,\dots, j_{k_m})}, t_{k_1,\dots,k_m}^{(j_{k_1},\, \dots, j_{k_m}+1)}\big ]}$ with $f(t_{k_1,\dots,k_m}^{(j_{k_1},\,\dots, j_{k_m})})$  and using the Lipschitz property of $f$, the error is of order  $\beta^{-m-k_1-\cdots -k_m}$. Thus we have 
(even in the sup-norm) 
$$f- \sum_{\underset{j_{k_1},\dots,j_{k_m}}{m+k_1+\cdots +k_m\approx M}}\, f(t_{k_1,\dots,k_m}^{(j_{k_1},\, \dots, j_{k_m})})\beta^{-m-k_1-\cdots -k_m}\, F_{k_1,\dots,k_m}^{(j_{k_1},\, \dots, j_{k_m})}=\mathcal{O}(L_f \beta^{-M}).$$
According to \eqref{eqn:Pnonexp}, $\T$ is a non-expansive map on $L^1$,  hence there exists a constant $C<\infty$ such that for all $N\geq 1$ we have:
$$\Big \Vert \T^Nf-
 \sum_{\underset{j_{k_1},\dots,j_{k_m}}{m+k_1+\cdots +k_m\approx M}}\, f(t_{k_1,\dots,k_m}^{(j_{k_1},\,\dots, j_{k_m})})\beta^{-m-k_1-\cdots -k_m}\, \T^N F_{k_1,\dots,k_m}^{(j_{k_1},\,\dots, j_{k_m})}\Big \Vert_{L^1}\leq C\,L_f \beta^{-M}.$$  If $N$ is larger than $M$, which is already larger than $m+k_1+\cdots+k_m$ (due to \eqref{dhc10}), then according to \eqref{may10} in Lemma \ref{lemmagh1} we have that both $\T^{N}F_{k_1,\dots,k_m}^{(j_{k_1},\, \dots, j_{k_m})}$ and $\T^{ M}F_{k_1,\dots,k_m}^{(j_{k_1},\, \dots, j_{k_m})}$  belong to the invariant subspace, are non-negative and their $L^1$ norm is constant equal to $1$ due to \eqref{eqn:intT}. Using \eqref{dhc3} with $r=N-M$ we have that in the $L^1$ sense:
 $$(\T^Nf)(\cdot)- \sum_{\underset{j_{k_1},\dots,j_{k_m}}{m+k_1+\cdots +k_m\approx M}}f\big (t_{k_1,\dots,k_m}^{(j_{k_1},\, \dots, j_{k_m})}\big )\,  \beta^{-m-k_1-\cdots-k_m} \Big (u_1(\cdot )+\mathcal{O}(|\lambda_2|^{N-M})\Big )=\mathcal{O}(L_f \beta^{-M}),$$
 where the bounding constants appearing in the two errors are independent of $N$ and $M$. 
 Up to another error of order $\mathcal{O}(\beta^{-M})$ we may replace the Riemann sum with $\int_0^1f(t)dt$. Hence, we have
 $$\T^Nf-u_1\int_0^1 f(t)dt =\mathcal{O}( \Vert f\Vert_{L^\infty}\, |\lambda_2|^{N-M})+\mathcal{O}(L_f\,  \beta^{-M}),\quad N>M.$$
 Given $N\gg 1$, we may choose an \virg{optimal} $M$ as a function of $N$ such that 
 $$|\lambda_2|^{N-M}\sim\beta^{-M},$$
 where $\sim$ means that they may differ by a numerical factor which is independent on $N$. If $n=2$ then $|\lambda_2|=q\beta^{-2}$, hence we may choose $M$ to be the integer part of $x$ where $x$ solves the equation 
$$x\, \ln(\beta)=(N-x)\ln (\beta^2/q),$$
 which gives $x=K_2 N$ with $K_2$ in \eqref{aug1}. 
 
  Also, since $|\lambda_2|<1/\beta$ for all $n$ (see \eqref{gh10}), by choosing $M$ to be the integer part of $N/2$ we see that the decay is always faster than $\beta^{-N/2}$. \qed

\section{Proof of Theorem \ref{thm2}}\label{sect3}

\subsection{Proof of \ref{itT2:1}.}
We only prove the result for $q>1$. Let us first show that $j/\beta<x_j<(j+1)/\beta$ for all $0\leq j\leq q-1$. The first inequality follows directly from the definition of $x_j$, while the second one is equivalent with 
 $$q\beta^{-2}+\cdots + q\beta^{-n}<\beta^{-1} \quad \text{or}\quad q\beta^{-1}+\cdots + q\beta^{-(n-1)}<1,$$
 the latter holds true by \eqref{1}.
This shows that $\psi_0$ is well defined on $[0,1]$  and by convention, it equals zero outside this interval. 

In view of \eqref{3}, if $q\beta^{-1}+\cdots + q\beta^{-(n-1)}<x<1$ we have 
$$(\T \psi_0)(x)=\beta^{-1}\sum_{j=0}^{q-1} \psi_0\big ((x+j)/\beta\big ).$$
For $x$ in that interval we also have 
$$q\beta^{-2}+\cdots + q\beta^{-n}+j/\beta =x_j<(x+j)/\beta<(j+1)/\beta,\quad 0\leq j\leq q-1, $$
which from the definition of $\psi_0$ it implies 
$$(\T \psi_0)(x)=\beta^{-1}\sum_{j=0}^{q-1} e^{2\pi i  (x+j)/q}=\beta^{-1}e^{2\pi ix/q}\sum_{j=0}^{q-1} \Big (e^{2\pi i/q}\Big )^j=0.$$
If $0<x<q\beta^{-1}+\cdots + q\beta^{-(n-1)}$ we have 
$$(\T \psi_0)(x)=\beta^{-1}\sum_{k=0}^{q} \psi_0\big ((x+k)/\beta\big ).$$
For $x$ in the above interval we also have 
$$ k/\beta < (x+k)/\beta < q/\beta^2 + \cdots + q/\beta^n + k/\beta = x_k, \quad 0 \le k \le  q,$$ 
which from the definition of $\psi_0$ it implies 
$$(\T \psi_0)(x)=\beta^{-1}\sum_{k=0}^{q} e^{2\pi i  (x+k)/(q+1)}=\beta^{-1}e^{2\pi ix/(q+1)}\sum_{k=0}^{q} \Big (e^{2\pi i/(q+1)}\Big )^k=0.$$

\subsection{Proof of \ref{itT2:2} and \ref{itT2:3}.} 
Let us show that $\widetilde{\K}=u_1^{1/p}\, \K \, \frac{1}{u_1^{1/p}}$ is an isometry on $L^p([0,1])$.  If $p=\infty$ then this follows directly from the definition in \eqref{may1}. If $1\leq p<\infty$ we have (using \eqref{oct1} in the third equality):
$$\Vert \widetilde{\K}(f)\Vert_{L^p}^p=\int_0^1 |\widetilde{\K}(f)|^pdx=\int_0^1 u_1 \K(|f|^p/u_1)\, dx=\int_0^1 (\T u_1) |f|^p/u_1\, dx=\Vert f\Vert_{L^p}^p.$$

The operator $\widetilde{\K}-z\,{\rm Id}  =u_1^{1/p}\, ( \K-z\, {\rm Id})\, u_1^{-1/p}$ is invertible if and only if $\K -z\, {\rm Id}$ is invertible,  hence $\widetilde{\K}$ and $\K$ have the same spectrum. Since $\widetilde{\K}$ is an isometry, it is also injective, hence $\K$ is injective, too. 

Now let us show that $\K$ (thus also $\widetilde{\K}$) is not surjective.  Using \eqref{oct1} and the eigenvector $\psi_0$ of $\T$ constructed at point (i) ($\psi_0$ belongs to any $L^{p'}$ with $1\leq p'\leq \infty$) we have 
$$\int_0^1 \overline{\psi_0(x)}\, (\K \, g)(x)\, dx=0,\quad \forall g\in L^p,\quad 1\leq p\leq \infty,$$
which implies that $\psi_0$ does not belong to the range of $\K$. This also implies that $u_1^{1/p}\psi_0$ does not belong to the range of $\widetilde{\K}$. 

Thus $\widetilde{\K}$ is a non-surjective isometry and its spectrum must equal the closed unit disk due to the following result which may be found in \cite[Proposition 5.2]{AC}, but we also prove it here (in a more self-contained way) for the convenience of the reader:  
\begin{lemma}\label{lemma-may1}
    Assume that $U$ defined on some Banach space is a linear isometry. If $U$ is surjective, then $\sigma(U)\subset \mathbb{S}^1$. If $U$ is not surjective then $\sigma(U)=\overline{\mathbb{D}}$. 
\end{lemma}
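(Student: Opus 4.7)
The plan is to combine two classical observations: every isometry has its approximate point spectrum contained in the unit circle $\mathbb{S}^1$, and the topological boundary of the spectrum of any bounded operator is contained in the approximate point spectrum. Together these force $\sigma(U)\cap \mathbb{D}$ to be clopen in $\mathbb{D}$, after which a connectedness argument handles both cases simultaneously.

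First I would verify that $\sigma_{ap}(U)\subseteq \mathbb{S}^1$. For every $z$ with $|z|\neq 1$ and every $x$ in the underlying Banach space, the reverse triangle inequality gives
\[
\|(U-z\,{\rm Id})x\|\ \geq\ \big|\,\|Ux\|-|z|\,\|x\|\,\big|\ =\ \big|1-|z|\big|\,\|x\|,
\]
so $U-z\,{\rm Id}$ is bounded below and $z$ cannot be an approximate eigenvalue. Next I would invoke the standard inclusion $\partial\sigma(U)\subseteq \sigma_{ap}(U)$, whose argument I would recall briefly: any $z\in\partial\sigma(U)$ is a limit of resolvent points $z_n$ along which $\|(U-z_n\,{\rm Id})^{-1}\|\to\infty$ (otherwise the resolvent would extend continuously to $z$), and normalising $(U-z_n\,{\rm Id})^{-1}y_n$ for appropriate unit vectors $y_n$ produces approximate eigenvectors of $U$ at $z$. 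Combining these two facts yields $\partial\sigma(U)\cap \mathbb{D}=\emptyset$, so $\sigma(U)\cap \mathbb{D}$ is both open and closed inside the connected open disk $\mathbb{D}$ and is therefore either empty or all of $\mathbb{D}$.

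The two alternatives of the lemma then correspond to which of these cases occurs, and this is decided by whether $0\in\sigma(U)$. If $U$ is surjective it is a bijective isometry, hence invertible (with $U^{-1}$ itself an isometry), so $0\notin\sigma(U)$; the clopen subset $\sigma(U)\cap\mathbb{D}$ must then be empty and we conclude $\sigma(U)\subseteq \mathbb{S}^1$. If on the contrary $U$ is not surjective, then $U=U-0\cdot{\rm Id}$ is itself not invertible, so $0\in\sigma(U)\cap\mathbb{D}$; the clopen alternative now forces $\sigma(U)\cap\mathbb{D}=\mathbb{D}$, and closedness of the spectrum inside $\overline{\mathbb{D}}$ gives $\sigma(U)=\overline{\mathbb{D}}$. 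The only non-routine ingredient is the classical inclusion $\partial\sigma(U)\subseteq\sigma_{ap}(U)$, and this is the step I would expect to require the most care; everything else reduces to the reverse triangle inequality and the connectedness of $\mathbb{D}$.
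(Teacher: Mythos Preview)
Your proof is correct and rests on the same computation as the paper's: the lower bound $\|(U-z)x\|\geq \big|1-|z|\big|\,\|x\|$ for an isometry, combined with the fact that the resolvent must blow up along any sequence in $\rho(U)$ approaching a boundary point of $\sigma(U)$. The organization differs slightly. You package the argument via the standard inclusion $\partial\sigma(U)\subseteq\sigma_{ap}(U)$ and a clopen-in-$\mathbb{D}$ dichotomy, which handles both cases uniformly; the paper instead treats the surjective case directly with a Neumann series (writing $U-z=({\rm Id}-zU^{-1})U$ for $|z|<1$) and, in the non-surjective case, argues by contradiction that any putative $\lambda\in\partial\sigma(U)\cap\mathbb{D}$ would have uniformly bounded resolvent nearby --- essentially the inline version of the lemma you invoke. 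Your formulation is a bit more streamlined; the paper's is more self-contained.
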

\begin{proof}
An isometry is always injective.  Let us first consider the case when $U$ is surjective (thus invertible). Using that $\Vert Uf\Vert =\Vert f\Vert$ for all $f$ and also $\Vert U^{-1}g\Vert=\Vert U(U^{-1} g)\Vert=\Vert g\Vert$ we conclude that both $U$ and $U^{-1}$ have norm one. Let $z\in \C$ be with $|z|<1$. Then $U-z\, {\rm Id}=( {\rm Id}-zU^{-1})\,U $ is invertible because $\Vert zU^{-1}\Vert <1$. If $|z|>1$ we have 
    $U-z\, {\rm Id}=-({\rm Id}-z^{-1}U)z$ which is also invertible. Thus $\sigma(U)$ is included in the unit circle.  

    Now let us consider the case when $U$ is not surjective. Because $\Vert U\Vert =1$ we know that $\sigma(U)\subset \overline{\mathbb{D}}$. Because $U$ is not invertible, then $0\in \sigma(U)$, hence $\sigma(U)$ has elements which are not on the unit circle. Thus if the inclusion $\sigma(U)\subset \overline{\mathbb{D}}$ is strict, there must exist a point $\lambda$ with $|\lambda|<1$ which belongs to the boundary of $\sigma(U)$. We will now show that $\lambda$ must be in the resolvent set of $U$, which would lead to a contradiction.

Since $\lambda\in \partial \big (\sigma(U)\big )$ there must exist a sequence of points $\lambda_n$ in the resolvent set of $U$ such that $\lambda_n\to \lambda$ when $n\to\infty$.  Since $|\lambda|<1$ there exists $N>1$ such that $|\lambda_n|\leq (1+|\lambda|)/2<1$ if $n>N$. Using the triangle inequality we get
$$\Vert (U-\lambda_n\, {\rm Id})f\Vert \geq \Vert Uf\Vert -|\lambda_n|\, \Vert f\Vert \geq \frac{1-|\lambda|}{2}\, \Vert f\Vert, \quad n>N. $$
Since $U-\lambda_n\, {\rm Id}$ is invertible, using this inequality with $f=(U-\lambda_n\, {\rm Id})^{-1}g$ we obtain: 
$$\Vert (U-\lambda_n\, {\rm Id})^{-1}\Vert \leq \frac{2}{1-|\lambda|},\quad n>N.$$
This uniform bound and the identity
$$U-\lambda\, {\rm Id}= \big ({\rm Id}+(\lambda_n-\lambda)(U-\lambda_n\, {\rm Id})^{-1}\big )\, (U-\lambda_n\, {\rm Id})$$
show that the right hand side must be invertible if $n$ is large enough, hence $\lambda$ is in the resolvent set of $U$ and cannot belong to the boundary of $\sigma(U)$. 
    \end{proof}

\subsection{Proof of \ref{itT2:4}.}

We know from (ii) that $u_1^{1/2}\, \K \, u_1^{-1/2}$ is an isometry on the Hilbert space $L^2([0,1])$.  Then \eqref{oct1} implies that $\T=\K^*$ and:
\begin{align}\label{hcj2}
   \Big (u^{-1/2} \T \, u_1^{1/2}\Big ) \Big (u_1^{1/2}\, \K \, u_1^{-1/2}\Big )=\Big (u^{1/2}\,  \K \, u_1^{-1/2}\Big )^* \Big (u_1^{1/2}\, \K\,  u_1^{-1/2}\Big )= {\rm Id}.
\end{align}

The isometry $u_1^{1/2}\, \K \, u_1^{-1/2}$ has norm one. If $|z|<1$ then $\psi_z$  is different from zero and can be written with the help of a Neumann series. Finally, 
\begin{align*}
\T \psi_z&=\T \psi_0+\sum_{m\geq 1} z^{m} u_1^{1/2}\, \Big (u_1^{-1/2} \T u_1^{1/2}\Big )\, \Big (u_1^{1/2} \K u_1^{-1/2}\Big )^{m} u_1^{-1/2}\psi_0\\
&=\sum_{m\geq 1} z^{m} u_1^{1/2}\, \Big (u_1^{1/2} \K u_1^{-1/2}\Big )^{m-1} u_1^{-1/2}\psi_0=z\psi_z,
\end{align*}
where in the second equality we used $\T \psi_0=0$ and \eqref{hcj2}.

%------
% Insert acknowledgments and information
% regarding funding at the end of the last
% section, i.e., right before the bibliography.
%------

%\begin{ack}
%\end{ack}

\begin{funding}
This work was funded by the Independent Research Fund Denmark--Natural Sciences,
grant DFF–10.46540/2032-00005B. G. M. gratefully acknowledges financial support from the European Research Council through the ERC CoG UniCoSM, grant agreement n.724939.
\end{funding}

%------
% Insert the bibliography.
%------

\appendix 

\section{The greedy algorithm}
\label{app:gral}
Let $x\in [0,1)$.  Applying the map $T_\beta$ we get that:
$$
T_{\beta}(x) = \beta x - \lfloor\beta x\rfloor\in [0,1),
$$
where $\lfloor\,\cdot\, \rfloor$ is the floor function and  $q<\beta=\beta_{n,q}<q+1$ in view of Lemma \ref{lemmagh2}\ref{itB:1}.
By iterating the map $T_\beta$, we define the \emph{$j$-th greedy coefficient} as:
\begin{equation}
\label{eqn:defxj}    
x_j := \lfloor\beta T^{(j-1)}_{\beta}(x) \rfloor\quad\forall\, j\geq 1\; \text{ with }\; T_{\beta}^{0}(x) := x.
\end{equation}
The following lemma describes the greedy algorithm.

\begin{lemma}  \label{prop10}
With the definitions above and with $\beta$ as in \eqref{1}, if $x\in [0,1)$ we have 
\begin{equation} \label{basicsum}
x = \sum_{j=1}^\infty x_j \beta^{-j}.
\end{equation}
The scaled remainder $\beta^k(x- \sum_{j=1}^k x_j\beta^{-j})$ obeys 
\begin{equation} \label{scaledremainder}
\beta^k(x- \sum_{j=1}^k x_j\beta^{-j}) = T^k_{\beta}(x).
\end{equation}
 Moreover, the greedy coefficients satisfy three restrictions:
\begin{enumerate}
    \item $x_j \in \{0,1,\cdots,q\}$ for all $j\geq 1$;
    \item $x_j = q $ for $n$ successive $j$'s cannot occur; 
    \item it cannot happen that the sequence of $x_j$'s ends in the infinite sequence $(c_1,c_2,...)$ where $c_{mn}=q-1$ for all $m\geq 1$, and all the other $c_j$'s, with $j$ not dividing $n$, are equal to $q$.
\end{enumerate}
\end{lemma}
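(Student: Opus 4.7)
The plan is to use the scaled-remainder identity \eqref{scaledremainder} as the backbone: once it is established, both the sum formula \eqref{basicsum} and the three combinatorial restrictions follow from the constraint $T_\beta^k(x) \in [0,1)$ for all $k \ge 0$.

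First I would prove \eqref{scaledremainder} by induction on $k$. The case $k=0$ is trivial, and for the inductive step, definition \eqref{eqn:defxj} gives $x_{k+1} = \lfloor \beta T_\beta^k(x) \rfloor$, so
\[
T_\beta^{k+1}(x) = \beta T_\beta^k(x) - x_{k+1} = \beta^{k+1}\Bigl(x - \sum_{j=1}^{k} x_j \beta^{-j}\Bigr) - x_{k+1} = \beta^{k+1}\Bigl(x - \sum_{j=1}^{k+1} x_j \beta^{-j}\Bigr)
\]
by the induction hypothesis. Rearranging, $0 \le x - \sum_{j=1}^k x_j \beta^{-j} = \beta^{-k} T_\beta^k(x) < \beta^{-k}$, which tends to zero and yields \eqref{basicsum}.

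For restriction (1), since $T_\beta^{j-1}(x) \in [0,1)$ and $\beta < q+1$ by Lemma \ref{lemmagh2}\ref{itB:1}, we have $\beta T_\beta^{j-1}(x) \in [0, q+1)$, so $x_j \in \{0, 1, \ldots, q\}$. For restriction (2), suppose for contradiction that $x_{k+i} = q$ for $1 \le i \le n$. Each identity $\lfloor \beta T_\beta^{k+i-1}(x) \rfloor = q$ forces $T_\beta^{k+i-1}(x) \ge q/\beta$, and running the recursion $T_\beta^{k+i-1}(x) = (T_\beta^{k+i}(x) + q)/\beta$ backwards from $T_\beta^{k+n}(x) \ge 0$ yields
\[
T_\beta^k(x) \ge \frac{q}{\beta} + \frac{q}{\beta^2} + \cdots + \frac{q}{\beta^n} = 1
\]
by \eqref{1}, contradicting $T_\beta^k(x) < 1$.

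For restriction (3), the key calculation is that the forbidden infinite tail sums to exactly $1$. Grouping into blocks of length $n$,
\[
\sum_{j=1}^\infty c_j \beta^{-j} = \sum_{m=0}^\infty \beta^{-mn} \Bigl( \sum_{k=1}^{n-1} q \beta^{-k} + (q-1) \beta^{-n} \Bigr) = (1 - \beta^{-n}) \sum_{m=0}^\infty \beta^{-mn} = 1,
\]
where the inner parenthesis collapses to $1 - \beta^{-n}$ via \eqref{1} and the outer geometric series cancels that factor exactly. Hence if the greedy sequence of some $x$ ended in this tail starting at position $k+1$, then \eqref{scaledremainder} would force $T_\beta^k(x) = 1$, contradicting $T_\beta^k(x) \in [0,1)$. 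The only delicate step is this block-sum identity, which recognizes the forbidden tail as precisely the competing representation of $1$; everything else is a direct unwinding of \eqref{eqn:defxj}.
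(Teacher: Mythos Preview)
Your proof is correct and follows essentially the same approach as the paper: the induction for \eqref{scaledremainder}, the convergence of \eqref{basicsum} from $T_\beta^k(x)\in[0,1)$, and the three restrictions all match the paper's argument, with only cosmetic differences (you phrase restriction~(2) as a backward recursion on $T_\beta^{k+i}$ where the paper bounds the tail sum directly, and you evaluate the forbidden tail in restriction~(3) via a geometric series where the paper iterates the substitution $1=\sum_{j=1}^{n-1}q\beta^{-j}+(q-1)\beta^{-n}+\beta^{-n}$). Both routes yield the same contradiction $T_\beta^k(x)\ge 1$.
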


\begin{proof}
(\ref{scaledremainder}) is true by definition for $k=1$.  Assuming this equation for some $k \ge 1$ we have $$T_{\beta}^{(k+1)}(x) = T_{\beta}(T_{\beta}^k(x)) = \beta T_{\beta}^k(x) - \lfloor \beta T_\beta^k \rfloor = \beta^{k+1} (x-\sum_{j=1}^k x_j \beta^{-j}) - x_{k+1}$$ $$ = \beta^{k+1}(x - \sum_{j=1}^{k+1} x_j \beta^{-j}).$$
Since $T_{\beta} : [0,1) \rightarrow [0,1)$ and $\beta >1$,  the series in (\ref{basicsum}) converges.

The first restriction on the $x_j$'s follows from their definition:  $0\leq x_j=\lfloor\beta T^{(j-1)}_{\beta}(x) \rfloor\leq \lfloor\beta  \rfloor=q$ because of Lemma \ref{lemmagh2}\ref{itB:1}.

To prove the second restriction on the coefficients suppose that there exists some $k\geq 0$ such that $x_{k+j} = q$, where $j\in\{ 1,..,n\}$. Using \eqref{1} we have 
$$ \sum_{j=1}^n q \beta^{-(k+j)} = \beta^{-k}.$$ If $k=0$, then $x\ge 1$, which is a contradiction. If $k\ge 1 $ then using \eqref{scaledremainder} and \eqref{basicsum} we have
$$T_\beta^k(x)=\beta^k\Big (x-\sum_{j=1}^k x_j \beta^{-j}\Big ) = \beta^k\Big (\sum_{j=k+1}^\infty x_j\beta^{-j}\Big ) \ge \beta^k\Big (\sum_{j=k+1}^{n+k} q\beta^{-j}\Big ) =1 $$
contradicting $T_\beta :[0,1) \rightarrow [0,1).$

In order to prove the third restriction, let us assume that there exists $x\in [0,1)$ whose greedy expansion ends with $\beta^{-k}\sum_{j\geq 1}c_j\beta^{-j}$ for some $k\geq 0$, i.e.\ $x_{k+j}=c_j$ for $j\geq 1$. By repeatedly using \eqref{1} (see also Figure \ref{fig:1}) we have 
\begin{align*}
1&= \sum_{j=1}^{n-1}q\beta^{-j}+(q-1)\beta^{-n} +
\beta^{-n}\\
&=\sum_{j=1}^{n-1}q\beta^{-j}+(q-1)\beta^{-n} +
\beta^{-n}\Big( \sum_{j=1}^{n-1}q\beta^{-j}+(q-1)\beta^{-n}\Big )+\beta^{-2n}=\ldots=\sum_{j\geq 1}c_j\beta^{-j},
\end{align*}
hence $x=\sum_{j=1}^k x_j\beta^{-j} +\beta^{-k}$ and thus by \eqref{scaledremainder} $T_\beta^k(x)=1$, contradiction. 
\end{proof}

\vspace{0.2cm}

Lemma \ref{prop10} has shown that the greedy algorithm gives a unique output for the coefficients $x_j$ defined in \eqref{eqn:defxj} for any number $x\in [0,1)$, and these coefficients obey three necessary conditions. In the next lemma we will show, in particular, that any expansion for $x\in [0,1)$ satisfying all these three conditions must be the greedy one.
\begin{lemma} 
Suppose 
\begin{equation} \label{tilderep}
x = \sum_{j=1}^\infty \tilde {x}_j\beta^{-j}
\end{equation}
where the coefficients $\tilde{x}_j \in \{0,1,\cdots,q\}$ also satisfy the condition that no $n$ consecutive coefficients equal $q$. 
Let $c_j = q-1$ if $n$ divides $j$, and $c_j = q$ otherwise. Let $x_j$ be defined as in \eqref{eqn:defxj}. 
Then one of the following possibilities occurs: 
\begin{enumerate}
\item $\tilde{x}_j = c_j$ for all $j$ in which case $x=1$;  
\item $x<1$ with $\tilde{x}_j = x_j $ for all $j$, i.e.\ $x$ is written in the greedy representation; 
\item $x<1$ and there exists some $k\geq 1$ such that $\tilde{x}_j=x_j$ for $j<k$ (if $k\geq 2$), $\tilde{x}_k=x_k-1$, and $\tilde{x}_{k+j} = c_j$ for $j\geq 1$.  In this case, the finite sum $x = \sum_{j=1}^k x_j \beta^{-j}$ is the greedy representation of x which is different from \eqref{tilderep}.
\end{enumerate}
\end{lemma}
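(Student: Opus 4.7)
The plan is to reduce everything to one sharp block inequality and then perform a digit-by-digit comparison with the greedy expansion. The key lemma I would establish first is: for any sequence $(b_j)_{j\geq 1}$ with $b_j\in\{0,\dots,q\}$ and no $n$ consecutive coefficients equal to $q$, we have $\sum_{j\geq 1}b_j\beta^{-j}\leq 1$, with equality if and only if $b_j=c_j$ for every $j$. To prove it, partition the positive integers into disjoint blocks $B_\ell=\{(\ell-1)n+1,\dots,\ell n\}$ of length $n$; the no-$n$-consecutive-$q$'s condition forces at least one index in $B_\ell$ with $b_j\leq q-1$. Writing $b_j=q-(q-b_j)$ and using \eqref{1} in the form $\sum_{j\in B_\ell}q\beta^{-j}=\beta^{-(\ell-1)n}$, the block sum equals $\beta^{-(\ell-1)n}-\sum_{j\in B_\ell}(q-b_j)\beta^{-j}$. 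The defect $\sum_{j\in B_\ell}(q-b_j)\beta^{-j}$ is a finite sum of nonnegative integer multiples of $\beta^{-j}$ with at least one positive term; since $\beta>1$, its minimum is $\beta^{-\ell n}$, attained uniquely at $q-b_{\ell n}=1$ and $q-b_j=0$ for the other $j\in B_\ell$. Summing over $\ell$ telescopes to $\sum_\ell(\beta^{-(\ell-1)n}-\beta^{-\ell n})=1$, with equality iff $b_j=c_j$ for all $j$.

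With the block inequality in hand, I would split into cases. If $\tilde x_j=c_j$ for every $j$ then $x=1$, giving alternative (1). Otherwise the strict form of the block inequality forces $x<1$, so Lemma \ref{prop10} applies and produces a greedy expansion $x=\sum_j x_j\beta^{-j}$. Let $k$ be the smallest index at which $\tilde x_k\neq x_k$; if no such $k$ exists we are in alternative (2), so assume $k$ exists.

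To rule out $\tilde x_k>x_k$, note that then $\tilde x_k\geq x_k+1$, so $\sum_{j\geq k}\tilde x_j\beta^{-j}\geq (x_k+1)\beta^{-k}$ and hence $x\geq \sum_{j<k}x_j\beta^{-j}+(x_k+1)\beta^{-k}$. But the greedy digit satisfies $\beta T_\beta^{k-1}(x)<x_k+1$, which via \eqref{scaledremainder} rewrites as $x<\sum_{j<k}x_j\beta^{-j}+(x_k+1)\beta^{-k}$, a contradiction. Hence $\tilde x_k\leq x_k-1$. Equating the two expansions of $x$ and isolating the tail gives
\[
\sum_{j>k}\tilde x_j\beta^{-j}\;=\;(x_k-\tilde x_k)\beta^{-k}+\sum_{j>k}x_j\beta^{-j}\;\geq\;\beta^{-k}.
\]
Applying the block inequality to the tail sequence $(\tilde x_{k+j})_{j\geq 1}$, which is still admissible, yields the reverse bound $\sum_{j>k}\tilde x_j\beta^{-j}\leq \beta^{-k}$. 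Equality throughout therefore forces $x_k-\tilde x_k=1$ and $\sum_{j>k}x_j\beta^{-j}=0$, so $x_j=0$ for $j>k$ and $x=\sum_{j=1}^k x_j\beta^{-j}$ terminates; the equality case of the block lemma then forces $\tilde x_{k+j}=c_j$ for all $j\geq 1$. This is precisely alternative (3).

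The main obstacle, and the only genuinely nontrivial step, is the equality characterization in the block inequality, namely that the tail can reach $\beta^{-k}$ only via the exact $c$-pattern. Everything else is bookkeeping once the lower bound $\geq \beta^{-k}$ coming from $\tilde x_k\leq x_k-1$ is matched against the upper bound $\leq \beta^{-k}$ from the block lemma.
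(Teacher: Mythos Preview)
Your proof is correct. Both your argument and the paper's rest on the same core fact: an admissible tail $\sum_{i\geq 1} b_i\beta^{-i}$ is at most $1$, with equality precisely for the $c$-pattern. You prove this explicitly and cleanly via a block decomposition into length-$n$ windows and a telescoping sum; the paper simply asserts it, pointing back to the identity $1=\sum_{j\geq 1}c_j\beta^{-j}$ computed in Lemma~\ref{prop10}. The subsequent case analyses are organized differently: the paper splits according to whether some scaled remainder $\beta^k\sum_{j>k}\tilde x_j\beta^{-j}$ ever equals $1$, and when it never does runs the direct induction $\beta x=\tilde x_1+t$ with $t\in[0,1)$, hence $x_1=\lfloor\beta x\rfloor=\tilde x_1$, etc.; you instead locate the first index $k$ of disagreement with the greedy digits, rule out $\tilde x_k>x_k$ via the floor inequality $\beta T_\beta^{k-1}(x)<x_k+1$, and then squeeze the tail between $\beta^{-k}$ from below (using $x_k-\tilde x_k\geq 1$) and $\beta^{-k}$ from above (your block lemma). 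Your route makes the pinching in case~(3) very transparent and forces $\sum_{j>k}x_j\beta^{-j}=0$ and $\tilde x_{k+j}=c_j$ in one stroke; the paper's route makes case~(2) a line shorter. The substance is the same.
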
 
\begin{proof}
The largest possible value of $\sum_{j=1}^\infty \tilde{x}_j \beta^{-j}$, which can be achieved with the $\tilde{x}_j$ obeying the two restrictions of the current lemma, equals $1$. This is the case if and only if  $\tilde{x}_j = c_j$, for all $j$.  

Assuming $ x < 1$, suppose that the sequence $(\tilde{x}_1,\tilde{x}_2,...)$ does not end in the infinite sequence $(c_1,c_2,...)$ so that the scaled remainder, $\beta^k\sum_{j=k+1}^\infty \tilde{x}_j\beta^{-j}< 1$ for all $k\ge 1$ (we have already assumed this for $k=0$). 
Then $\tilde{x}_j = x_j$ for all $j$:  to see this we have $x_1 = \lfloor \beta x \rfloor$ and  
$\beta x = \tilde{x}_1 + \beta\sum_{j=2}^\infty \tilde{x}_j\beta^{-j} = \tilde{x}_1 + t$ with 
$t\in [0,1)$.  Thus $x_1 = \tilde{x}_1$.  A simple induction gives $x_j = \tilde{x}_j$ for all $j$.

On the other hand, suppose that $k$ is the first integer such that $\beta^k \sum_{j=k+1}^\infty \tilde{x}_j \beta^{-j} = 1.$  Then $\tilde{x}_{k+j} = c_j$, $j\ge 1$ and $\tilde{x}_j =
x_j$, $j<k$, $\tilde{x}_k +1 = x_k\leq q $.  Thus $\tilde{x}_k \leq  q-1$. 
If $\tilde{x}_k = q-1$ the previous (if there are that many) $n-1$ $\tilde{x}_j$'s cannot equal $q$ because that would violate the definition of $k$.  Thus $x = \sum_{j=1}^k x_j \beta^{-j}$, the greedy representation, is a different representation of $x$. \end{proof}

\section{Properties of $\beta_{n,q}$}

The following lemma is given for the sake of the reader and collects in one place a number of known results \cite{B, Mi60}. 
\begin{lemma}\label{lemmagh2}
Let $n,q\in \mathbb{N}$ with $n\geq 2$ and $1\leq q$. Let $$P_{n,q}(z)=z^n-q(z^{n-1}+z^{n-2}+\cdots +z+1)$$ with $z\in \C$.  Then 

\begin{enumerate}[label=(\roman*), ref=(\roman*)]
\item \label{itB:1}
$P_{n,q}$ has only one positive root $\beta_{n,q}$, which also obeys $q<\beta_{n,q}<q+1$.  

\item \label{itB:2} All roots have algebraic multiplicity one. 

\item \label{itB:3} The other roots of $P_{n,q}$ satisfy $\big (q/(q+2)\big )^{1/n}< |z|< 1$. In particular, $\beta_{n,q}$ is a Pisot number.

\item \label{itB:4} Fix $\alpha\in (q,q+1)$. Then there exists $n_0\geq 2$ such that $(q+1)-q\alpha^{-n}\leq \beta_{n,q}<q+1$ for all $n\geq n_0$. 
\end{enumerate}
\end{lemma}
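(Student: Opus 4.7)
The whole argument hinges on the factorisation
$$(z-1)P_{n,q}(z) \;=\; z^{n+1}-(q+1)z^n+q \;=:\; A(z),$$
which collapses everything onto a three-term polynomial on which calculus and Rouch\'e-type arguments work cleanly. For \ref{itB:1}, I will restrict $A$ to $[0,\infty)$: from $A'(z)=z^{n-1}((n+1)z-n(q+1))$, $A$ has a single positive critical point $z^*=n(q+1)/(n+1)\in(1,q+1)$, a local minimum. Since $A(0)=q>0$, $A(1)=0$, $A(q+1)=q>0$, and $A$ is still strictly decreasing at $z=1$, the minimum value satisfies $A(z^*)<0$, so $A$ has exactly two positive roots: $z=1$ and a unique $\beta\in(z^*,q+1)$. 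Because $P_{n,q}(1)=1-nq\ne 0$, $\beta$ is the unique positive root of $P_{n,q}$, and the sign check $P_{n,q}(q)<0<P_{n,q}(q+1)=1$ places it in $(q,q+1)$. For \ref{itB:2}, the identity
$$P_{n,q}(z)+(z-1)P_{n,q}'(z) \;=\; A'(z) \;=\; z^{n-1}\bigl((n+1)z-n(q+1)\bigr)$$
forces any common zero of $P_{n,q}$ and $P_{n,q}'$ to lie in $\{0,z^*\}$; both are ruled out by $P_{n,q}(0)=-q$ and $A(z^*)<0$.

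The substantial work is in \ref{itB:3}. The upper bound $|z|<1$ I plan to obtain by Rouch\'e on $A$. Writing $A=f+g$ with $f(z)=-(q+1)z^n$ and $g(z)=z^{n+1}+q$, the comparison $|f|>|g|$ on $|z|=1+\epsilon$ reduces (via $|g|\le(1+\epsilon)^{n+1}+q$) to $(1+\epsilon)^n(q-\epsilon)>q$, which holds for all small $\epsilon>0$ because both sides equal $q$ at $\epsilon=0$ and the $\epsilon$-derivative of the left side is $q(n-1)>0$ there. Rouch\'e then gives exactly $n$ zeros of $A$ in $|z|<1+\epsilon$; letting $\epsilon\downarrow 0$ puts $n$ zeros in $\overline{\mathbb D}$, one of which is $z=1$, leaving $n-1$ other roots of $P_{n,q}$ in $\overline{\mathbb D}$. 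I push them off the unit circle via the relation $z^n(z-(q+1))=-q$ (valid for every root $z\ne 1$ of $P_{n,q}$, read off from $A(z)=0$): if $|z|=1$ then $|z-(q+1)|=q$, and intersecting $|z|=1$ with $|z-(q+1)|=q$ algebraically yields only $z=1$, which is not a root of $P_{n,q}$. The lower bound drops out of the same relation: for $|z|<1$, $|z-(q+1)|\le|z|+q+1<q+2$, so $|z|^n=q/|z-(q+1)|>q/(q+2)$. The Pisot assertion is then immediate since $P_{n,q}$ is monic with integer coefficients.

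For \ref{itB:4}, $A(\beta_{n,q})=0$ rewrites as the fixed-point relation
$$\beta_{n,q} \;=\; (q+1) - q\,\beta_{n,q}^{-n}.$$
I plan to first show $n\mapsto\beta_{n,q}$ is strictly increasing with limit $q+1$. Writing $A_n$ for the $A$-polynomial of order $n$, direct substitution gives $A_{n+1}(\beta_{n,q})=q(1-\beta_{n,q})<0$, while $A_{n+1}(q+1)=q>0$ and $\beta_{n+1,q}$ is the unique root of $A_{n+1}$ in $(1,q+1]$; hence $\beta_{n,q}<\beta_{n+1,q}$. The limit $L\in(q,q+1]$ then satisfies the same fixed-point relation, and $L>1$ forces $\beta_{n,q}^{-n}\to 0$, so $L=q+1$. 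Given $\alpha\in(q,q+1)$, pick $n_0$ with $\beta_{n,q}\ge\alpha$ for all $n\ge n_0$; then $\beta_{n,q}^{-n}\le\alpha^{-n}$ in the fixed-point relation yields the claim. The step I expect to be most delicate is the Rouch\'e comparison in \ref{itB:3}: the estimate $|f|>|g|$ degenerates to equality on $|z|=1$ precisely at $z=1$, which is why I need the perturbation to $|z|=1+\epsilon$ together with the separate algebraic exclusion of roots on the unit circle.
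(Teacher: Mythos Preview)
Your proof is correct and follows essentially the same route as the paper: both hinge on the auxiliary polynomial $A(z)=(z-1)P_{n,q}(z)=z^{n+1}-(q+1)z^n+q$ (which the paper calls $Q_{n,q}$), use the same Rouch\'e decomposition $A=(-(q+1)z^n)+(z^{n+1}+q)$ on $|z|=1+\epsilon$, exclude unit-circle roots by the same relation $z^n(z-(q+1))=-q$, and derive \ref{itB:4} from the fixed-point identity $\beta_{n,q}=(q+1)-q\beta_{n,q}^{-n}$ together with monotonicity of $n\mapsto\beta_{n,q}$. One small slip: the $\epsilon$-derivative of $(1+\epsilon)^n(q-\epsilon)$ at $\epsilon=0$ is $nq-1$, not $q(n-1)$; the conclusion is unaffected since $nq-1>0$ for $n\ge 2$, $q\ge 1$.
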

\begin{proof}

{\it \ref{itB:1}} If $x>0$ we define $f(x):=x^{-n}P_{n,q}(x)=1-q(x^{-1}+\cdots+ x^{-n})$. We have that $f'>0$, which means that it can have at most one positive root. 

If $q=1$ we have 
$$f(1)=1- n<0,\quad f(2)=2^{-n}>0$$
hence there exists a unique, simple root between $1$ and $2$. 

For $q>1$ we have
$$f(q)=1-\frac{1-q^{-n}}{1-q^{-1}}=\frac{q^{-n}-q^{-1}}{1-q^{-1}}<0,\quad f(q+1)=1-\frac{q}{q+1}\, \frac{1-(q+1)^{-n}}{1-(q+1)^{-1}}=(q+1)^{-n}>0$$
thus there always exists a unique positive root $\beta_{n,q}\in (q,q+1)$.

{\it \ref{itB:2}} Now let us prove that all the other roots are also simple. If $z\neq 1$ we have $$P_{n,q}(z)=z^n-q\frac{z^n-1}{z-1}=\frac{z^{n+1}-(q+1)z^n +q}{z-1}=: \frac{Q_{n,q}(z)}{z-1}\, .$$

 Since $z=1$ is not a root,  $P_{n,q}(z)$ has the same roots (those different from $1$) as $Q_{n,q}(z)$. If $z_1\neq 1$ is a degenerate root of $P_{n,q}$, i.e. $P_{n,q}(z_1)=P_{n,q}'(z_1)=0$, then we also have  $Q_{n,q}(z_1)=Q_{n,q}'(z_1)=0$. But 
    $$Q_{n,q}'(z)=(n+1)z^n-(q+1)nz^{n-1}=(n+1)z^{n-1}\big ( z-(q+1)n/(n+1) \big )$$
    and since $0$ is not a root, we must have $z_1=(q+1)n/(n+1)$, which is positive. But we know that $P_{n,q}$ only has a non-degenerate positive root, contradiction.

{\it \ref{itB:3}} We want to show that $Q_{n,q}$ has exactly $n$ roots inside the closed unit complex disk. Let $F(z)=z^{n+1}+q$ and $G(z)=-(q+1)z^n$. If $|z|=1+\epsilon$ with $\epsilon>0$ small we have 
$$|F(z)|\leq q+1+(n+1)\epsilon +\mathcal{O}(\epsilon^2),\quad |G(z)|=(q+1)(1+n\epsilon )+\mathcal{O}(\epsilon^2),$$
and since $nq>1$ we have that $|G(z)|>|F(z)|$ on $|z|=1+\epsilon$ if $\epsilon$ is small enough. This implies that the function 
$$H_t(z):=tF(z)+G(z),\quad H_0(z)=G(z),\quad H_1(z)=Q_{n,q}(z)$$
obeys $|H_t(z)|\geq |G(z)|-|F(z)|>0$
on the circle $|z|=1+\epsilon$ for all $t\in [0,1]$. Thus the number of zeros of $H_t$ inside the disk $|z|\leq 1+\epsilon$
is constant in $t$ and equals $n$. Taking the limit $\epsilon\downarrow 0$, we conclude that $Q_{n,q}$ has exactly $n$ zeros inside the complex closed unit disk. Now if $z$ is a zero with $|z|=1$ we have 
$$|z^{n+1}+q|=(q+1)|z^n|=q+1$$
which is possible only for $z^{n+1}=1$. But then $(q+1)z^n=q+1$, hence $z^n=1$. This implies that $z=1$. Hence $P_{n,q}$ has exactly $n-1$ complex roots inside the open unit disk.  

Now let $z_1$ be such a root with $|z_1|<1$ and $Q_{n,q}(z_1)=0$. Then $$(q+1)|z_1|^n=|(q+1)z_1^{n}|\geq q-|z_1|^{n+1}>q-|z_1|^n$$
which leads to 
$$|z_1|^n>q/(q+2).$$

{\it \ref{itB:4}} Fix any $n_0\geq 2$ and let $n\geq n_0$. We have $$1/\beta_{n,q}+\cdots +1/\beta_{n,q}^{n_0}\leq 1/\beta_{n,q}+\cdots+1/\beta_{n,q}^n=1/q,$$ hence $\beta_{n,q}\geq \beta_{n_0,q}$. Also, $Q_{n,q}(\beta_{n,q})=0$, hence $\beta_{n,q}$ solves $\beta_{n,q}=q+1-q/\beta_{n,q}^n$. Thus  
   $$q+1-q/\beta_{n_0,q}^{n}\leq \beta_{n,q}<q+1,\quad 2\leq n_0\leq n.$$
   Now we can choose $n_0$ large enough such that $\beta_{n_0,q}>\alpha$ and we are done. 

\end{proof}


\begin{thebibliography}{00}


 
\bibitem{AA}{A. Abdesselam,} The weakly dependent strong law of large numbers revisited. \emph{G.J.M.} {\bf 3} (2018), no. 2, 94-97  \url{https://gradmath.org/wp-content/uploads/2020/10/Abdesselam-GJM-2018.pdf}

\bibitem{AC}{R.F. Allen, F. Collona,} {Isometries and spectra of multiplication operators on the Bloch space.} \emph{Bull. Aust. Math. Soc.} {\bf 79} (2009), 147–160  \url{https://doi.org/10.1017/S0004972708001196}

\bibitem{Bl}{F. Blanchard,}  
$\beta$-expansions and symbolic dynamics.
\emph{Theoret. Comput. Sci.} {\bf 65}(1989), no. 2, 131–141 \url{https://doi.org/10.1016/0304-3975(89)90038-8}

\bibitem{BG} {A. Boyarsky, P. G{\'o}ra,} \emph{Laws of chaos. Invariant measures and dynamical systems in one dimension.}
Probability and its Applications. Birkh{\"a}user, Boston, MA, 1997. \url{https://link.springer.com/book/10.1007/978-1-4612-2024-4}


\bibitem{B}{A. Brauer,} {On algebraic equations with all but one root in the interior of the unit circle.} \emph{Math. Nachr.} {\bf 4} (1950), 250-257 \url{https://doi.org/10.1002/mana.3210040123}

\bibitem{CCD}{E. Charlier, C.  Cisternino, K. Dajani}, {Dynamical behavior of alternate base expansions.} \emph{Ergod. Th. \& Dynam. Sys.} {\bf 43} (2023), no. 3, 827-860 \url{https://doi.org/10.1017/etds.2021.161}

\bibitem{DK}{K. Dajani, C. Kalle,} \emph{A First Course in Ergodic Theory.} Chapman and
Hall/CRC (2021) \url{https://doi.org/10.1201/9780429276019}

\bibitem{Go}{P. G{\'o}ra,}:
Invariant densities for generalized $\beta$-maps. \emph{Ergod. Th. \& Dynam. Sys.} {\bf 27} (2007), 1583–1598 \url{https://doi.org/10.1017/S0143385707000053}

\bibitem{HMS}{I.W. Herbst, J. M\o ller, A.M. Svane,}
How many digits are needed? \emph{Methodol. Comput. Appl. Probab.} {\bf 26} (2024), article no. 5 \url{https://doi.org/10.1007/s11009-024-10073-2}

\bibitem{HMS2}{I.W. Herbst, J. M\o ller, A.M. Svane,} The asymptotic distribution of the scaled remainder for pseudo golden ratio expansions of a continuous random variable.  \emph{Methodol. Comput. Appl. Probab.}  {\bf 27} (2025), article no. 10,  \url{https://doi.org/10.1007/s11009-025-10137-x}



\bibitem{KLP}{V. Komornik, P. Loreti, M. Pedicini,} A quasi-ergodic approach to non-integer base expansions. \emph{J. Number Theory} {\bf 254} (2024), 146--168 \url{https://doi.org/10.1016/j.jnt.2023.07.009}

\bibitem{LY}{A. Lasota, J.A. Yorke,}
On the existence of invariant measures for piecewise monotonic transformations. \emph{Trans. Amer. Math. Soc.} {\bf 186} (1973), 481–488 \url{https://www.ams.org/journals/tran/1973-186-00/S0002-9947-1973-0335758-1/}

\bibitem{Mi60} {E.P. Miles,} {
Generalized Fibonacci Numbers and Associated Matrices}. \emph{Amer. Math. Monthly} {\bf 67} (1960), no. 8, 745-752 \url{https://doi.org/10.2307/2308649}

\bibitem{Pa}{W. Parry,} On the $\beta$-expansions of real numbers. \emph{Acta Math. Acad. Sci.
Hungar.} {\bf 11} (1960), 401–416 \url{https://doi.org/10.1007/BF02020954}

\bibitem{Pe}{M. Pedicini,} Greedy expansions and sets with deleted digits. \emph{Theoret. Comput. Sci.} {\bf 332} (2005), 313--336  \url{https://doi.org/10.1016/j.tcs.2004.11.002}

\bibitem{Re}{A. R{\'e}nyi,} Representations for real numbers and their ergodic properties. \emph{Acta Math. Acad. Sci.
Hungar.} {\bf 8} (1957), 477–493 \url{https://doi.org/10.1007/BF02020331} 


\bibitem{Suz}{S. Suzuki,} Eigenfunctions of the Perron–Frobenius operators for generalized beta-maps. \emph{Dynamical Systems} {\bf 37} (2022), no. 1, 9–28 \url{https://doi.org/10.1080/14689367.2021.1998378} 

\bibitem{Wal}{P. Walters,} Equilibrium states for $\beta$-transformations and related transformations.  \emph{Math. Z.} {\bf 159}, 65–88 (1978) \url{https://doi.org/10.1007/BF01174569}

\end{thebibliography}
\end{document}